\documentclass{article}
\usepackage[utf8]{inputenc}
\usepackage{enumerate}
\usepackage{amssymb, amsmath}
\usepackage{amsthm, thmtools, thm-restate}
\usepackage{hyperref, cleveref}
\usepackage{tikz}
\usetikzlibrary{arrows}
\usetikzlibrary{decorations.markings}
\usepackage[hmargin=3cm,vmargin=2.5cm]{geometry}
\setlength{\parindent}{0in}
\setlength{\parskip}{2.5mm}

\newcommand{\tmcl}{\text{cl}_{\text{tm}}}
\newcommand{\dt}{D_{\mathcal{T}}}
\newcommand{\et}{E_{\mathcal{T}}}
\newcommand{\mt}{\mathcal{T}}

\newcommand{\fraisse}{Fra\"{i}ss\'{e} }
\newcommand*{\defeq}{\mathrel{\vcenter{\baselineskip0.5ex \lineskiplimit0pt
                     \hbox{\scriptsize.}\hbox{\scriptsize.}}}
                     =}

\declaretheorem[name=Theorem, refname={Theorem, Theorems}, numberwithin=section]{thm}
\declaretheorem[name=Lemma, refname={Lemma,Lemmas}, sibling=thm]{lem}
\declaretheorem[name=Fact, refname={Fact,Facts}, sibling=thm]{fact}

\declaretheorem[name=Corollary, sibling=thm]{cor}

\theoremstyle{definition}
\declaretheorem[name=Definition, refname={Definition,Definitions}, sibling=thm]{Def}

\title{$2^{\aleph_0}$ pairwise non-isomorphic maximal-closed subgroups of Sym$(\mathbb{N})$ via the classification of the reducts of the Henson digraphs}
\author{Lovkush Agarwal and Michael Kompatscher}
\date{\today}
%\subjclass[2010]{03C05, 03C40, 20B27, 20B35}

%START OF DOCUMENT
%START OF DOCUMENT

\begin{document}
\maketitle
\begin{abstract}
Given two structures $\mathcal{M}$ and $\mathcal{N}$ on the same domain, we say that $\mathcal{N}$ is a reduct of $\mathcal{M}$ if all $\emptyset$-definable relations of $\mathcal{N}$ are $\emptyset$-definable in $\mathcal{M}$. In this article the reducts of the Henson digraphs are classified. Henson digraphs are homogeneous countable digraphs that omit some set of finite tournaments. As the Henson digraphs are $\aleph_0$-categorical, determining their reducts is equivalent to determining all closed supergroups $G<$ Sym$(\mathbb{N})$ of their automorphism groups. 

A consequence of the classification is that there are $2^{\aleph_0}$ pairwise non-isomorphic Henson digraphs 
which have no proper non-trivial reducts. Taking their automorphisms groups gives a positive answer to a question of Macpherson that asked if there are $2^{\aleph_0}$ pairwise non-conjugate maximal-closed subgroups of Sym$(\mathbb{N})$. By the reconstruction results of Rubin, these groups are also non-isomorphic as abstract groups.
\end{abstract}

This article contributes to the large body of work concerning the two intimately related topics of reducts of countable structures and of closed subgroups of Sym$(\mathbb{N})$.  Motivation for this work comes from both areas.

In the topic of reducts, the reducts of the Henson digraphs are classified up to first order interdefinability. This is the first time the reducts of uncountably many homogeneous structures have been classified. In all cases only finitely many reducts appear. This result supports a conjecture of Thomas in \cite{tho96} which says that all countable homogeneous structures in a finite relational language have only finitely many reducts. Evidence for this conjecture is building as there have been numerous classification results, e.g. \cite{cam76}, \cite{tho91}, \cite{tho96},  \cite{jz08}, \cite{bpp13}, \cite{ppp+}, \cite{aga14}. This conjecture is unresolved and continues to provide motivation for study.

In the topic of permutation groups, this article answers a question of Macpherson, Question 5.10 in \cite{bm15}, which asked to show that there are $2^{\aleph_0}$ pairwise non-conjugate maximal-closed subgroups of Sym$(\mathbb{N})$ with Sym($\mathbb{N}$) bearing the pointwise convergence topology. Several related questions have recently been tackled. Independently, \cite{bm15} and \cite{br13} showed that there exist non-oligomorphic maximal-closed subgroups of Sym$(\mathbb{N})$, the existence of which was asked in \cite{jz08}.  Also, independently, \cite{ks13} and \cite{br13} positively answered Macpherson's question of whether there are countable maximal-closed subgroups of Sym$(\mathbb{N})$.  One question that remains open is whether every proper closed subgroup of Sym($\mathbb{N}$) is contained in a maximal-closed subgroup of Sym($\mathbb{N}$), (Question 7.7 in \cite{mn96} and Question 5.9 in \cite{bm15}).

The main tool used in this classification of the reducts of the Henson digraphs is that of the so-called `canonical functions'. This Ramsey-theoretic tool was developed by Bodirsky and Pinsker to help analyse certain closed clones in relation to constraint satisfaction problems, a topic in theoretical computer science.  With further developments (\cite{bp11}, \cite{bpt13}), canonical functions have become powerful tools in studying reducts. The robustness and relative ease of the methodology is becoming more evident as several classifications have been achieved by their use, e.g. \cite{bpp13}, \cite{ppp+}, \cite{bb13}, \cite{aga14}, \cite{lp14}.

The description of $2^{\aleph_0}$ maximal-closed subgroups follows from the main theorem by taking the automorphism groups of a suitably modified version of Henson's (\cite{hen72}) construction of $2^{\aleph_0}$ pairwise non-isomorphic countable homogeneous digraphs. A short argument shows that their automorphism groups will be pairwise non-conjugate. However, we can say more: by Rubin's work on reconstruction (\cite{rub94}), the automorphism groups will be pairwise non-isomorphic as abstract groups. 

We outline the structure of the paper. In Section 1, we provide the necessary preliminary definitions and facts on the Henson digraphs, reducts and canonical functions. We also comment on some notational conventions that we use. In Section 2, we prove the main result of the article - the classification of the reducts of the Henson digraphs. In Section 2.1 we state the main result. In Section 2.2 we describe the reducts, establishing notation and important lemmas that are used in the rest of the paper. In Section 2.3 we carry out the combinatorial analysis of the possible behaviours of canonical functions. 2.4 contains the proof of the main theorem. In Section 3, we conclude by using the main theorem to show that there exist $2^{\aleph_0}$ maximal-closed subgroups of Sym$(\mathbb{N})$.

\section{Preliminaries}
\subsection{Notational Conventions}
If $A$ is a subset of $D$, $A^c$ denotes the complement of $A$ in $D$. We sometimes write `$ab$' as an abbreviation for $(a,b)$, e.g., we may write ``Let $ab$ be an edge of the digraph $D$''. Structures are denoted by $\mathcal{M},\mathcal{N}$, and their domains are $M$ and $N$ respectively. Sym$(M)$ is the set of all bijections $M \to M$ and Aut$(\mathcal{M})$ is the set of all automorphisms of $\mathcal{M}$. Given a formula $\phi(x,y)$, we use $\phi^*(x,y)$ to denote the formula $\phi(y,x)$. $S(\mathcal{M})$ denotes the space of types of the theory of $\mathcal{M}$. 
If $f$ has domain $A$ and $(a_1,\ldots,a_n) \in A^n$, then $f(a_1,\ldots,a_n) \defeq (f(a_1),\ldots,f(a_n))$. For $\bar{a},\bar{b} \in M^n$, we say $\bar{a}$ and $\bar{b}$ are isomorphic, and write $\bar{a} \cong \bar{b}$, to mean that the function $a_i \mapsto b_i$ for all $i$ such that $1 \leq i \leq n$ is an isomorphism.

There will be instances where we do not adhere to strictly correct notational usage, however, the meaning will be clear from the context. We highlight some examples. When using $n$-tuples, say $(a_1,\ldots,a_n) \in M^n$, we shall always assume that $a_i \neq a_j$ for all $i\neq j$.  We sometimes write `$a \in (a_1,\ldots,a_n)$' instead of `$a=a_i$ for some $i$ such that $1 \leq i \leq n$'. Another example is that we sometimes use $c$ to represent the singleton set $\{ c \}$ containing it. A fourth example is we may write `$\bar{a} \in A$' instead of `$\bar{a} \in A^n$ for some $n$'.

\subsection{Henson Digraphs}
A directed graph $(V,E)$, or digraph for short, is a set $V$ with an irreflexive anti-symmetric relation $E \subseteq V^2$. $V$ is the set of vertices, $E$ is the set of edges and we visualise an element $(a,b) \in E$ as being an edge going out of $a$ and into $b$. We say a digraph is empty if $E=\emptyset$. By $L_n$ we denote the linear order on $n$-elements, regarded as a digraph.

A tournament is a digraph in which there is an edge between every pair of distinct vertices. Throughout this article, $\mt$ will denote a set of finite tournaments. We will often refer to elements of $\mt$ as forbidden tournaments.

\begin{Def} \begin{enumerate}[(i)]
\item A structure $\mathcal{M}$ is \emph{homogeneous} if every isomorphism $f:A \to B$ between finite substructures $A,B$ of $\mathcal{M}$ can be extended to an automorphism $g \in$ Aut$(\mathcal{M})$.
\item For a structure $\mathcal{M}$, the \emph{age} of $\mathcal{M}$, Age($\mathcal{M})$, is the set of finite structures embeddable in $\mathcal{M}$.
\item Let $\mt$ be a set of tournaments. We let Forb$(\mt)$ be the set of finite digraphs $D$ such that for all $T \in \mt$, $D$ does not embed $T$.
\item We let $(\dt,\et)$ be the unique (up to isomorphism) countable homogeneous digraph whose age is Forb$(\mt)$. 
\item A \emph{Henson digraph} is a digraph isomorphic to $(\dt,\et)$ where $\mt$ is non-empty and does not contain the 1- or 2-element tournament.
\end{enumerate}
\end{Def}

The fact that $(\dt,\et)$ exists and is unique follows from the general \fraisse theory of amalgamation classes, developed by \fraisse in \cite{fra53}. This particular construction of digraphs was used by Henson in \cite{hen72} to show there exists uncountably many countable homogeneous digraphs. An accessible account on the theory of amalgamation classes can be found in \cite{hod97}. 

If $\mt=\emptyset$ then $(\dt,\et)$ is the generic digraph, the unique countable homogeneous digraph that embeds all finite digraphs. The reducts of the generic digraph are classified in \cite{aga14}. If $\mathcal{T}$ contains the 1-element tournament, then Forb($\mt)=\emptyset$. If $\mathcal{T}$ contains the 2-element tournament, then $(\dt,\et)$ is the countable empty digraph. These are degenerate cases which is why we defined the term Henson digraph to exclude these options.

\begin{lem} \label{hensondigraphs} Let $(D,E)$ be a Henson digraph.
\begin{enumerate}[(i)]
\item Th$(D,E)$ is $\aleph_0$-categorical.
\item $(D,E)$ is connected: for every distinct $a,b \in D$, there is a path from $a$ to $b$.
\end{enumerate}
\end{lem}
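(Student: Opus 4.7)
For (i), the plan is to verify the hypothesis of the Ryll-Nardzewski theorem. Since the only non-logical symbol is the binary $E$, there are only finitely many isomorphism types of $n$-vertex digraphs for each $n$. Homogeneity of $(D,E)$ implies that the orbit of a tuple $\bar{a}\in D^n$ under Aut$(D,E)$ is determined by the isomorphism type of the substructure $\bar{a}$ induces, so Aut$(D,E)$ has only finitely many orbits on $n$-tuples for every $n$; that is, Aut$(D,E)$ is oligomorphic. Ryll-Nardzewski then yields $\aleph_0$-categoricity of Th$(D,E)$.

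For (ii), I would exploit the extension property of the \fraisse limit: to produce a directed path from $a$ to $b$, it suffices to exhibit a finite $H$ in Forb$(\mt)$ containing $\{a,b\}$ as a substructure together with a directed $a$-to-$b$ path in $H$, because homogeneity then lets me realise the inclusion $\{a,b\} \hookrightarrow (D,E)$ inside a copy of $H$. I would split into cases according to the edge relation on $\{a,b\}$ in $(D,E)$: if $a \to b$, nothing is to be done; if $a$ and $b$ are non-adjacent, look for a single $c$ with $a \to c \to b$; if $b \to a$, look for $c_1,c_2$ with $a \to c_1 \to c_2 \to b$ and with the pairs $\{a,c_2\}$ and $\{b,c_1\}$ both non-adjacent.

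In each case one must verify that the candidate extension is in Forb$(\mt)$, which is where the assumption that $\mt$ contains only tournaments of size $\geq 3$ enters. In the non-adjacent case, the $3$-vertex extension retains the non-edge $\{a,b\}$ and so is not itself a tournament, and being of size $3$ it cannot embed any larger tournament; hence no member of $\mt$ embeds. In the case $b\to a$, each of the four $3$-element subsets of $\{a,b,c_1,c_2\}$ contains at least one non-edge by construction, so no $3$-vertex tournament embeds, and the $4$-vertex structure itself has non-edges and so is not a tournament either; hence again nothing in $\mt$ embeds.

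The main obstacle, conceptually, is the final case: one cannot simply patch with a length-$2$ path, because the only way to fit $a \to c \to b$ with $b \to a$ already present yields the cyclic $3$-tournament $C_3$, which may itself lie in $\mt$ and so be forbidden. The length-$3$ construction above is the simplest uniform fix that succeeds for every admissible $\mt$.
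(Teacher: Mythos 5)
Your proof of (i) is the paper's argument spelled out: the paper simply cites the standard fact that any homogeneous structure in a finite relational language is $\aleph_0$-categorical, while you unpack the intermediate steps (finitely many isomorphism types of $n$-element digraphs; homogeneity implies two $n$-tuples lie in the same orbit iff they induce isomorphic substructures; hence Aut$(D,E)$ is oligomorphic; hence $\aleph_0$-categoricity by Ryll-Nardzewski). Same route, more detail.

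For (ii) you read ``path from $a$ to $b$'' as a directed path and prove strong connectedness. The paper in fact only proves (and only later uses, when invoking the Lachlan--Woodrow classification of countable homogeneous graphs for $(D,\bar E)$) undirected connectedness: it dispatches both adjacency cases with a single ``without loss suppose there is no edge between $a$ and $b$'' and then produces exactly the same $3$-vertex witness $a' \to c' \to b'$ with $\{a',b'\}$ a non-edge that you use in your non-adjacent case. Your case $b \to a$ is genuine extra content: you correctly note that the naive length-$2$ patch $a \to c \to b$ together with the existing edge $b \to a$ yields the $3$-cycle, which may be forbidden, and your length-$3$ path on $\{a,c_1,c_2,b\}$ with non-edges $\{a,c_2\}$ and $\{b,c_1\}$ has a non-edge in every $3$-element subset and is not itself a tournament, so lies in Forb$(\mathcal{T})$ since every forbidden tournament has at least $3$ vertices. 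Thus your argument is correct and establishes the stronger (directed) statement, which subsumes the undirected connectedness the paper actually needs; the approach (exhibit a small Forb$(\mathcal{T})$ extension and invoke homogeneity) is the same.
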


\begin{proof}
(i) The theory of any homogeneous structure in a finite relational language is $\aleph_0$-categorical. See \cite{hod97} for details.

(ii) Let $a,b \in D$ be distinct and without loss suppose that there is no edge between $a$ and $b$. Consider the finite digraph $\{a',b',c'\}$ such that there is no edge between $a'$ and $b'$, and there is an edge from $a'$ to $c'$ and from $c'$ to $b'$. Observe that $\{a',b',c'\}$ lies in Forb($\mt$), so is embeddable in $(D,E)$. By the homogeneity of $(D,E)$, we map $a'$ to $a$ and $b'$ to $b$ to obtain a $c \in D$ with $E(a,c)$ and $E(c,b)$.
\end{proof}

In order to use the canonical functions machinery, we need to expand the Henson digraphs to ordered digraphs. This is described in the following definition.

\begin{Def} \begin{enumerate}[(i)]
\item An \emph{ordered digraph} is a digraph which is also linearly ordered. Formally, it is a structure $(V,E,<)$ where $(V,E)$ is a digraph and $(V,<)$ is a linear order.
\item We let $(\dt, \et, <)$ be the unique (up to isomorphism) countable homogeneous ordered digraph such that a finite ordered digraph $(D,E,<)$ is embeddable in $(\dt,\et,<)$ iff $(D,E) \in$ Forb$(\mt)$.
\item We say $(D,E,<)$ is a \emph{Henson ordered digraph} if $(D,E,<) \cong (\dt,\et,<)$ for some $\mathcal{T}$.
\end{enumerate}
\end{Def}

\begin{fact} All Henson ordered digraphs are Ramsey structures.
\end{fact}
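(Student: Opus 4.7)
The plan is to derive this fact as a direct consequence of the Nešetřil--Rödl theorem. In its form relevant here, that theorem states: given a finite relational language $L$ and a set $\mathcal{F}$ of finite \emph{irreducible} $L$-structures (where irreducible means that every pair of distinct elements is related by some relation of positive arity), the class of finite linearly ordered $L$-structures omitting every member of $\mathcal{F}$ as a (not necessarily induced) substructure is a Ramsey class.

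Specifically, I would first identify the age of $(\dt,\et,<)$. By Definition 1.5(ii), this age is precisely the class $\mathcal{K}$ of finite ordered digraphs $(D,E,<)$ with $(D,E) \in \text{Forb}(\mt)$. I would then verify that each forbidden tournament $T \in \mt$ is irreducible in the above sense: a tournament has, by definition, an edge between every pair of distinct vertices, so the irreducibility hypothesis is automatic. Consequently the Nešetřil--Rödl theorem applies and yields that $\mathcal{K}$ is a Ramsey class: for every $A, B \in \mathcal{K}$ and every $k \in \mathbb{N}$, there is some $C \in \mathcal{K}$ such that for every $k$-coloring of the copies of $A$ in $C$, one finds a copy of $B$ in $C$ all of whose copies of $A$ are monochromatic.

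Finally, I would invoke the standard correspondence (essentially contained in the Kechris--Pestov--Todorcevic framework) between Ramsey classes of finite structures and Ramsey \emph{structures}: if the age of a Fraïssé limit $\mathcal{M}$ is a Ramsey class, then $\mathcal{M}$ itself is a Ramsey structure. Applying this to $(\dt, \et, <)$ finishes the proof.

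The only potentially subtle point is ensuring that the version of Nešetřil--Rödl cited matches exactly what is needed — in particular, that ``forbidding tournaments as substructures'' in the digraph sense is the same as ``forbidding them as substructures'' in the relational sense used in the theorem. This is immediate because tournaments are complete digraphs in the sense that every pair of vertices carries an edge, so any embedding of a tournament into a larger digraph is automatically induced on the tournament's edge-set. Beyond this bookkeeping, there is essentially no obstacle, which is why the statement is presented as a fact rather than a lemma.
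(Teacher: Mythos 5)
Your proposal is correct and is essentially the same argument the paper has in mind: the paper simply remarks that the fact follows from a direct application of the main theorem of Ne\v{s}et\v{r}il--R\"odl \cite{nr83}, and your write-up spells out the routine verifications behind that application (tournaments are irreducible, induced versus weak substructure coincide for tournaments, and a Ramsey age yields a Ramsey structure via the Kechris--Pestov--Todorcevic correspondence).
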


This fact follows by a direct application of the main theorem of \cite{nr83}. For the purposes of this article, it is not necessary to know what it means to be a Ramsey structure. The definition and examples of Ramsey structures can be found in \cite{jln14} and references therein. The importance of the Ramsey property and of introducing ordered digraphs will become evident in the Section 1.4.

\subsection{Reducts}
Let $\mathcal{M},\mathcal{N}$ be two structures on the same domain $M$. We say $\mathcal{N}$ is a \emph{reduct} of $\mathcal{M}$ if all $\emptyset$-definable relations in $\mathcal{N}$ are $\emptyset$-definable in $\mathcal{M}$. We say $\mathcal{N}$ is a proper reduct of $\mathcal{M}$ if $\mathcal{N}$ is a reduct of $\mathcal{M}$ but $\mathcal{M}$ is not a reduct of $\mathcal{N}$. In this article, if two structures $\mathcal{M}$ and $\mathcal{N}$ are both reducts of each other, we consider them to be the same structure. 

For any structure $\mathcal{M}$, the reducts of $\mathcal{M}$ form a lattice where $\mathcal{N} \leq \mathcal{N}'$ if $\mathcal{N}$ is a reduct of $\mathcal{N'}$. As well as classifying the reducts of  a Henson digraph, the lattice they form is also determined.

As a consequence of the theorem of Engeler, Ryll-Nardzewski and Svenonius (see \cite{hod97}), if $\mathcal{M}$ is $\aleph_0$-categorical then the lattice of reducts is anti-isomorphic to the lattice of closed groups $G$ such that Aut($\mathcal{M}) \leq G \leq$ Sym$(M)$.  This means that determining the lattice of reducts of the Henson digraphs is equivalent to determining the lattice of closed supergroups of the automorphism groups of the Henson digraphs.

We note that by closed we mean closed in the pointwise convergence topology on Sym$(M)$. Unravelling the definitions, this means that $F \subseteq$ Sym$(M)$ is closed if $F=$cl($F$), where $g\in$ Sym$(M)$ is in cl$(F)$ if for all finite $A \subset M$, there exists $f \in F$ such that $f(a)=g(a)$ for all $a \in A$.

\subsection{Canonical Functions} \label{canonical}
\begin{Def} Let $\mathcal{M}, \mathcal{N}$ be any structures. Let $f: M \to N$ be any function between the domains of the structures.
\begin{enumerate}[(i)]
\item The \emph{behaviour} of $f$ is the relation $\{(p,q) \in S(\mathcal{M})\times S(\mathcal{N}): \exists \bar{a} \in M, \bar{b} \in N$ such that tp$(\bar{a})=p$, tp$(\bar{b})=q$ and $f(\bar{a})=\bar{b} \}$.
\item If the behaviour of $f$ is a function $S(M) \to S(N)$, then we say $f$ is \emph{canonical}. Rephrased, we say $f$ is canonical if for all $\bar{a}, \bar{a}' \in M$, tp$(\bar{a})=$ tp$(\bar{a}') \Rightarrow$ tp$(f(\bar{a}))=$ tp$(f(\bar{a}'))$.
\item If $f$ is canonical, we use the same symbol $f$ to denote its behaviour.
\end{enumerate}
\end{Def}

For example, for any structure $\mathcal{M}$, every automorphism $f \in$ Aut$(\mathcal{M})$ is a canonical function, and for all types $p \in S(\mathcal{M})$, $f(p)=p$.

The benefit of canonical functions is that they are particularly well-behaved and can be easily manipulated and analysed. Furthermore, the next theorem, \autoref{blackbox}, essentially reduces the task of determining reducts to the task of analysing the behaviours of canonical functions.  In order to state the theorem, we need to give a couple of definitions.

\begin{Def}
Let $M$ be a countable set. Then $M^M$ is a topological monoid under the pointwise convergence topology and function composition. For $F \subseteq M^M$, we let $\tmcl(F)$, the topological monoid closure of $F$, denote the smallest closed monoid in $M^M$ containing $F$. If $M$ is the domain of a structure $\mathcal{M}$, we may abuse notation and write $\tmcl(F)$ for $\tmcl($Aut$(\mathcal{M}) \cup F)$.
\end{Def}

\begin{thm}\label{blackbox} Let $(D,E,<)$ be a Henson ordered digraph. Let $f \in$ Sym$(D)$ and $c_1,\ldots, c_n \in D$ be any vertices. Then there exists a function $g:D \to D$ such that \begin{enumerate}[(i)]
\item $g \in \tmcl($Aut$(D,E) \cup \{f\})$.
\item $g(c_i)=f(c_i)$ for $i=1,\ldots n$.
\item When regarded as a function from $(D,E,<,\bar{c})$ to $(D,E)$, $g$ is a canonical function.
\end{enumerate}
\end{thm}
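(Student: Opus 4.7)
The plan is to set $\mathcal{M} \defeq (\dt, \et, <, \bar c)$ and $\mathcal{N} \defeq (\dt, \et)$, regard $f$ as a function $\mathcal{M} \to \mathcal{N}$, and find a self-embedding $e \colon \mathcal{M} \hookrightarrow \mathcal{M}$ for which $f \circ e$ is canonical as a map $\mathcal{M} \to \mathcal{N}$; then I take $g \defeq f \circ e$.

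Granted such an $e$, all three conclusions come cheaply. Clause (iii) is by construction, and (ii) holds because a self-embedding of $\mathcal{M}$ necessarily fixes $\bar c$ pointwise, so $g(c_i) = f(e(c_i)) = f(c_i)$. For (i), any self-embedding of $(\dt, \et)$ fixing $\bar c$ is a pointwise limit of automorphisms of $(\dt, \et)$ fixing $\bar c$: for each finite $F \subseteq \dt$ with $\bar c \subseteq F$, the map $e|_F$ is a partial isomorphism of $(\dt, \et)$ fixing $\bar c$, and by homogeneity it extends to some $\alpha_F \in \text{Aut}(\dt, \et)$. Since $\alpha_F(x) = e(x)$ once $x \in F$, the net $(f \circ \alpha_F)$ converges pointwise to $f \circ e$; as $\tmcl(\text{Aut}(\dt, \et) \cup \{f\})$ is closed under composition and pointwise limits and contains both $\text{Aut}(\dt, \et)$ and $f$, it contains $g$.

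The heart of the argument is the construction of $e$, and it uses the Ramsey property of $(\dt, \et, <)$. Adding finitely many named constants preserves Ramseyness on the class of copies of finite substructures containing those constants, so $\mathcal{M}$ is itself Ramsey in the relevant sense. For each $k$, the restriction of $f$ to $k$-tuples of $\mathcal{M}$ is a colouring by $k$-types of $\mathcal{N}$, and this colouring uses only finitely many colours by $\aleph_0$-categoricity of $(\dt, \et)$ (\autoref{hensondigraphs}(i)). Applying the Ramsey property arity by arity produces nested self-embeddings of $\mathcal{M}$ whose images witness canonicity of $f$ on tuples of ever-larger arity; a standard compactness / K\"onig's lemma argument over the tree of finite partial self-embeddings of $\mathcal{M}$ that so far preserve canonicity on the tuples seen then extracts a single $e \colon \mathcal{M} \hookrightarrow \mathcal{M}$ for which $f \circ e$ is canonical at every arity.

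The main obstacle is the combinatorial bookkeeping in this last step: one must arrange each successive Ramsey application inside the image of the previous embedding while simultaneously tracking all $\mathcal{M}$-types of bounded arity. This is precisely the template of the canonical-function theorem of Bodirsky, Pinsker and Tsankov (\cite{bpt13}) specialised to the Ramsey $\aleph_0$-categorical expansion $\mathcal{M}$ of the reduct $\mathcal{N}$, and the statement above is essentially a direct invocation of that general result.
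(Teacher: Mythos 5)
Your proposal is correct and takes the same approach as the paper: the paper simply invokes Lemma 14 of Bodirsky--Pinsker--Tsankov \cite{bpt13}, noting that the one nontrivial hypothesis to check is the Ramsey property of the Henson ordered digraphs. You rederive the needed special case (find a self-embedding $e$ of $(D,E,<,\bar c)$ by iterated Ramsey + K\"{o}nig, put $g = f \circ e$, and use homogeneity to place $g$ in $\tmcl(\text{Aut}(D,E)\cup\{f\})$), which is a faithful sketch of the BPT argument; the two steps you correctly flag as non-immediate---that the Ramsey property survives adding constants and the bookkeeping in the compactness extraction---are exactly what the cited lemma packages up.
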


This theorem is just an application of Lemma 14 in \cite{bpt13} to the Henson ordered digraphs. The toughest condition that needs to be checked in using Lemma 14 is that of being a Ramsey structure. As discussed earlier, the Henson ordered digraphs are indeed Ramsey structures.

\section{Classification of the Reducts}
For this section, we fix a Henson ordered digraph $(D,E,<)$ and let $\mt$ be its set of forbidden tournaments.

\subsection{Statement of Main Result}
\begin{Def}
\begin{enumerate}[(i)]
\item For $F \subseteq$ Sym($D$), let $\langle F \rangle$ denote the smallest closed subgroup of $G$ containing $F$. For brevity, when it is clear we are discussing supergroups of Aut$(D,E)$, we may abuse notation and write $\langle F \rangle$ to mean $\langle F$ $\cup $ Aut$(D,E) \rangle$.

\item  We let $\bar{E}(x,y)$ denote the underlying graph relation $E(x,y) \vee E(y,x)$. We let $N(x,y)$ denote the non-edge relation $\neg \bar{E}(x,y)$.

\item Assume $(D,E)$ is isomorphic to the digraph obtained by changing the direction of all its edges. In this case $- \in$ Sym$(D$) will denote a bijection such that for all $x,y \in D$, $E(-(x),-(y))$ iff $E(y,x)$.

\item Assume $(D,E)$ is isomorphic to the digraph obtained by changing the direction of all the edges adjacent to one particular vertex of $D$. In this case $sw \in$ Sym$(D$) will denote a bijection such that for some $a \in D$:
\[ E(sw(x),sw(y)) \text{ if and only if } \begin{cases}
  E(x,y) \text{ and } x,y \neq a, \text{ OR,}\\
  E(y,x) \text{ and } x=a \vee y=a
\end{cases}
\]
\end{enumerate}
\end{Def}

In words, $-$ is a function which changes the direction of all the edges of the digraph and $sw$ is a function which changes the direction of those edges adjacent to one particularly vertex.  The existence of $-$ or $sw$ depends on which tournaments are forbidden. This explains the wording of \autoref{main}(iii): if, for example, $-$ exists but $sw$ does not, then max$\{$Aut$(D,E), \langle - \rangle, \langle sw \rangle$, $\langle -,sw \rangle\}=\langle - \rangle$.

%The next theorem is the classification of the reducts of the Henson digraphs.

\begin{restatable}{thm}{maintheorem} \label{main}
Let $(D,E)$ be a Henson digraph and let $G \leq Sym(D)$ be a closed supergroup of Aut$(D,E)$. Then:
\begin{enumerate}[(i)]
\item $G \leq$ Aut$(D,\bar{E})$ or $G \geq$ Aut$(D,\bar{E})$
\item If $G <$ Aut$(D,\bar{E})$ then $G=$ Aut$(D,E), \langle - \rangle, \langle sw \rangle$ or $\langle -,sw \rangle.$
\item $(D,\bar{E})$ is the random graph, $(D,\bar{E})$ is a Henson graph or $(D,\bar{E})$ is not homogeneous. In the last case  Aut$(D,\bar{E})$ is equal to max$\{$Aut$(D,E), \langle - \rangle, \langle sw \rangle$, $\langle -,sw \rangle\}$ and is a maximal-closed subgroup of Sym($D$).
\end{enumerate}
\end{restatable}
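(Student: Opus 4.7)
The plan is to use Theorem \ref{blackbox} to reduce every claim to an analysis of canonical functions $g\colon (D,E,<,\bar c)\to (D,E)$, whose behaviour is a map on the finitely many 2-types. For any $G$ containing Aut$(D,E)$ and any $f\in G$ realising a given ``bad'' behaviour on some finite $\bar c$, extracting a canonical $g \in \tmcl($Aut$(D,E)\cup\{f\})\subseteq G$ reduces the classification question to enumerating the behaviours canonical functions can exhibit and recording which closed group each generates over Aut$(D,E)$. This is the content of Section 2.3, which I would assume as the combinatorial engine behind the theorem.

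For part (i), I would assume $G$ contains some $f$ that does not preserve $\bar E$, so $f$ sends an edge to a non-edge or vice versa. Extracting a canonical witness $g\in G$ via Theorem \ref{blackbox}, and iterating $g$ and composing with Aut$(D,E)$, one should be able to build up every $\bar E$-preserving permutation, forcing Aut$(D,\bar E)\leq G$. For part (ii), I would suppose Aut$(D,E) < G \leq $ Aut$(D,\bar E)$ and pick $f\in G\setminus $ Aut$(D,E)$. Every canonical $g$ obtained from $f$ preserves $\bar E$ but swaps some edges with their reversals; the combinatorial analysis restricts the 2-type-level swaps to those realised by $-$ (global reversal) and $sw$ (reversal at one vertex), yielding the listed four groups, with $-$ or $sw$ existing precisely when compatible with Forb$(\mt)$.

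For part (iii), I would first determine $(D,\bar E)$ by inspecting its age: if every finite complete graph has an orientation in Forb$(\mt)$, the age is all finite graphs and $(D,\bar E)$ is the random graph; otherwise $(D,\bar E)$ is a Henson graph if its age is amalgamating, and not homogeneous in the remaining case. For the maximality assertion, assume $(D,\bar E)$ is not homogeneous, and suppose for contradiction that there is a closed $H$ with Aut$(D,\bar E)< H < $ Sym$(D)$. Pick $h\in H\setminus$ Aut$(D,\bar E)$ and apply Theorem \ref{blackbox} to produce a canonical witness to $h$'s failure to preserve $\bar E$. The arguments used in part (i) then force $H \supseteq $ Aut$(D,\bar E)$ together with some further canonical function; crucially, since $(D,\bar E)$ is not itself an amalgamation class, any canonical function lying strictly beyond Aut$(D,\bar E)$ should force the closure to contain Sym$(D)$, contradicting $H\neq$ Sym$(D)$.

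The main obstacle is the combinatorial analysis of canonical behaviours in Section 2.3: checking that the only non-trivial canonical behaviours consistent with $\bar E$-preservation correspond to $-$ and $sw$, and that every other non-trivial canonical behaviour generates all of Aut$(D,\bar E)$. This must be carried out uniformly in $\mt$, which rules out any case distinction depending on which particular tournaments are forbidden, and it is precisely this uniformity -- combined with the need to handle the non-homogeneous-$(D,\bar E)$ case in part (iii) without invoking a classification of reducts of $(D,\bar E)$ -- that makes the analysis delicate.
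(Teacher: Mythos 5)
Your top-level strategy matches the paper's: extract canonical functions via \autoref{blackbox}, and classify their behaviours on 2-types. You correctly identify that the combinatorial analysis of Section 2.3 is the real work and leave it as a black box, so the proposal is really a strategy outline. For parts (i) and (ii), the outline is essentially correct; the paper's proof of (i) proceeds by a sequence of claims that repeatedly normalize $f$ (post-composing with $-$ or $sw_Y$ as needed, taking care that $sw_Y$ is actually a well-defined permutation of $D$) until $f$ behaves like the identity everywhere except on the constants, at which point \autoref{equalsSymD}(iv) gives $G=\text{Sym}(D)$.

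Part (iii) has genuine gaps. First, your criterion for classifying $(D,\bar E)$ via ``inspecting its age'' is not correct: a graph having an amalgamating age need not be homogeneous, so the dichotomy you describe does not yield the trichotomy in the statement. The trichotomy is in fact immediate: either $(D,\bar E)$ is not homogeneous, or it is, and then since it is connected and embeds all finite empty graphs it must be the random graph or a Henson graph by the Lachlan--Woodrow classification --- no age-based analysis is needed. Second, you argue only that $\text{Aut}(D,\bar E)$ is maximal-closed, not that it equals $G'\defeq\max\{\text{Aut}(D,E),\langle-\rangle,\langle sw\rangle,\langle-,sw\rangle\}$. The paper handles both at once by showing directly that $G'$ is maximal-closed: taking $G'<G\leq\text{Sym}(D)$, the normalization of Claims 1--4 (composing only with $-$ and $sw_A$, which lie in $G'$) yields an $f$ with $f(\bar c)\not\cong\bar c$, putting one in the situation of \autoref{equalsSymD}(iv) or \autoref{understandingN}(iv); those lemmas give either $G=\text{Sym}(D)$ or that $(D,\bar E)$ is homogeneous, and the latter is ruled out by hypothesis. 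Since $G'\leq\text{Aut}(D,\bar E)<\text{Sym}(D)$, maximality of $G'$ then forces the equality. Your sketch, which asserts that a canonical function strictly beyond $\text{Aut}(D,\bar E)$ ``should force'' $\text{Sym}(D)$ ``since $(D,\bar E)$ is not an amalgamation class,'' gestures at the right intuition but omits exactly the step where the non-homogeneity hypothesis enters the argument (ruling out the alternative branch of the lemma dichotomy), and does not address the equality with $G'$ at all.
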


The reducts of the random graph and the Henson graphs were classified by Thomas in \cite{tho91}. If $(D, \bar E)$ is the random graph, its only proper reducts are $\langle sw_{\Gamma} \rangle$ and $\langle -_{\Gamma} \rangle$, where $-_{\Gamma} \in$ Sym($D$) is a bijection which maps every edge to a non-edge and every non-edge to an edge and $sw_{\Gamma}$ is a bijection which does the same but only for those edges adjacent to a particular vertex $a \in D$. Henson graphs have no proper reducts. As an immediate consequence we get the following corollary of of \autoref{main}:

\begin{cor} Let $(D,E)$ be a Henson digraph. Then its lattice of reducts is a sublattice of the lattice below.  In particular, the lattice of reducts of $(D,E)$ is (isomorphic to) a sublattice of the lattice of reducts of the generic digraph (\cite{aga14}).

\begin{center}
\begin{tikzpicture}[node distance=1.3cm, auto]
  \node (D) {Aut$(D,E)$};
  \node (sw) [above left of=D] {$\langle sw \rangle$};
  \node (glo) [above right of=D] {$\langle - \rangle$};
  \node (sup) [above right of=sw] {$\langle sw,- \rangle$};
  \node (Ra) [above of=sup, yshift=-2mm] {Aut($D,\bar{E}$)};
  \node (sw2) [above left of=Ra] {$\langle sw_{\Gamma} \rangle$};
  \node (glo2) [above right of=Ra] {$\langle -_{\Gamma} \rangle$};
  \node (sup2) [above right of=sw2] {$\langle sw_{\Gamma},-_{\Gamma} \rangle$};
  \node (top) [above of=sup2] {Sym(D)};
  \draw[-] (D) to node {} (sw);
  \draw[-] (D) to node {} (glo);
  \draw[-] (sw) to node {} (sup);
  \draw[-] (glo) to node {} (sup);
  \draw[-] (sup) to node {} (Ra);
  \draw[-] (Ra) to node {} (sw2);
  \draw[-] (Ra) to node {} (glo2);
  \draw[-] (sw2) to node {} (sup2);
  \draw[-] (glo2) to node {} (sup2);
  \draw[-] (sup2) to node {} (top);
\end{tikzpicture}
\end{center}
\end{cor}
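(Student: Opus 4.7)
The plan is to derive the corollary directly from \autoref{main} by splitting the lattice of closed supergroups of Aut$(D,E)$ into a lower half (contained in Aut$(D,\bar E)$) and an upper half (containing Aut$(D,\bar E)$), and then invoking Thomas's classification of the reducts of the random graph and the Henson graphs from \cite{tho91} for the upper half. Part (i) of \autoref{main} is exactly what lets us do this partition: every closed $G$ with Aut$(D,E) \leq G \leq \mathrm{Sym}(D)$ lies either below or above Aut$(D,\bar E)$, and the two halves share only the point Aut$(D,\bar E)$.

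For the lower half, there is essentially nothing to do: part (ii) of \autoref{main} literally lists the possibilities as Aut$(D,E)$, $\langle sw \rangle$, $\langle - \rangle$, $\langle sw,- \rangle$, and Aut$(D,\bar E)$ itself. These are exactly the bottom five nodes of the pictured lattice; depending on whether $-$ and $sw$ actually exist (which depends on $\mt$) some of them may coincide with Aut$(D,E)$, but what survives is always a sublattice of the bottom portion of the diagram.

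For the upper half, I would handle separately the three cases of part (iii). If $(D,\bar E)$ is the random graph, Thomas's theorem says its only proper reducts correspond to the closed supergroups $\langle sw_\Gamma \rangle, \langle -_\Gamma \rangle, \langle sw_\Gamma,-_\Gamma \rangle$ of Aut$(D,\bar E)$ sitting strictly below $\mathrm{Sym}(D)$; together with Aut$(D,\bar E)$ and $\mathrm{Sym}(D)$ these produce exactly the top portion of the pictured lattice. If $(D,\bar E)$ is a Henson graph, Thomas's classification says it has no proper reducts, so the only closed supergroups of Aut$(D,\bar E)$ are itself and $\mathrm{Sym}(D)$; this is a (degenerate) sublattice of the top portion. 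If $(D,\bar E)$ is not homogeneous, part (iii) of \autoref{main} directly asserts that Aut$(D,\bar E)$ is maximal-closed in $\mathrm{Sym}(D)$, so again the upper half degenerates to the two-element chain $\{\mathrm{Aut}(D,\bar E), \mathrm{Sym}(D)\}$.

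Gluing the two halves at Aut$(D,\bar E)$ yields the first claim. For the second claim, one just has to observe that the pictured lattice is exactly the lattice of reducts of the generic digraph established in \cite{aga14}, so being a sublattice of the former is the same as being a sublattice of the latter. There is no real obstacle here; all the difficulty has been absorbed into \autoref{main} and the two cited classifications. The only mild point of care is checking that the bijections $sw, -, sw_\Gamma, -_\Gamma$ really do generate distinct closed groups whenever they exist, so that the nodes of the pictured lattice are genuinely distinct and the sublattice statement is non-vacuous; but this comparison of definable reducts is routine and is implicit in the wording of \autoref{main}(iii).
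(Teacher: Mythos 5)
Your proof is correct and matches the paper's (implicit) reasoning: the paper presents the corollary as an immediate consequence of \autoref{main} combined with Thomas's classification of reducts of the random graph and Henson graphs, and your splitting of the lattice at Aut$(D,\bar E)$ via parts (i), (ii), (iii) is exactly the intended argument. No gaps.
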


\subsection{Understanding the reducts}
In this section, we establish several important lemmas that play prominent roles in the proof of the main theorem. We omit the proofs of the lemmas for two reasons: They are relatively straightforward and are mostly identical to the lemmas in \cite[Section 3]{aga14}. Before we delve into the lemmas, we describe some terminology.

\begin{itemize}
\item Let $f,g: D \to D$ and $A \subseteq D$. We say \emph{$f$ behaves like $g$ on $A$} if for all finite tuples $\bar{a} \in A$, $f(\bar{a})$ is isomorphic (as a finite digraph) to $g(\bar{a})$. If $A=D$, we simply say \emph{$f$ behaves like $g$}. 
\item Let $A,B$ be disjoint subsets of $D$. We say $f$ behaves like $sw$ between $A$ and $B$ if $f$ switches the direction of all edges between $A$ and $B$ and preserves all non-edges between $A$ and $B$.
\item Let $A \subseteq D$. We let $sw_A: D \to D$ denote a function that behaves like $id$ on $A$ and $A^c$ and that behaves like $sw$ between $A$ and $A^c$. Note that the existence of $sw_A$ will depend on $A$ and on $\mt$.
\item We overload the symbols $-$ and $sw$ by letting them denote actions on finite tournaments. We say $\mt$ is closed under $-$ if for every $T \in \mt$, the tournament obtained from $T$ by changing the direction of all its edges is in $\mt$.  We say $\mt$ is closed under $sw$ if for every $T \in \mt$ and $t \in T$, the tournament obtained by changing the direction of those edges adjacent to $t$ is in $\mt$.
\end{itemize}

\begin{lem} \label{understanding-andsw}
\begin{enumerate}[(i)]
\item $-:D \to D$ exists if and only if $\mt$ is closed under $-$.
\item $sw:D \to D$ exists if and only if $\mt$ is closed under $sw$.
\item $\langle - \rangle \supseteq \{f \in$ Sym$(D): f$ behaves like $- \}$.
\item $\langle sw \rangle \supseteq \{f \in$ Sym$(D):$ there is $A \subseteq D$ such that $f$ behaves like $sw_A \}$.
\end{enumerate}
\end{lem}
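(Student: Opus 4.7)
The plan is to handle the four parts in order; (i) and (ii) are \fraisse-theoretic existence statements, while (iii) and (iv) unpack the meaning of ``behaves like'' and reduce it, respectively, to a factorisation argument and a closure argument.

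For (i), the plan is as follows. If $\mt$ is closed under $-$, then the class Forb$(\mt)$ is closed under reversing the direction of every edge, so the reverse digraph $(D,E^*)$ is again a countable homogeneous digraph with age Forb$(\mt)$; by uniqueness of the \fraisse limit, $(D,E^*)\cong (D,E)$, and this isomorphism is by definition $-$. Conversely, if $-$ exists then $(D,E)\cong(D,E^*)$, so Forb$(\mt)=$ Forb$(\mt^*)$, and taking $\mt$ to be the set of minimal forbidden tournaments this forces $\mt = \mt^*$. Part (ii) is proved identically, with the operation ``reverse all edges incident to one vertex'' replacing the operation ``reverse all edges''; the vertex-by-vertex nature of $sw$ is harmless because the digraph has a Fraïssé limit regardless of which single vertex is chosen.

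For (iii), the plan is to factor $f = g\circ -$ with $g \in$ Aut$(D,E)$. Unpacking the definition, ``$f$ behaves like $-$'' is equivalent to $E(f(a),f(b)) \iff E(b,a)$ for all $a,b\in D$; combined with the defining property $E(-(x),-(y)) \iff E(y,x)$ of $-$, a direct check shows that $g\defeq f\circ -^{-1}$ preserves $E$, hence $g\in$ Aut$(D,E)\subseteq \langle-\rangle$, and $f=g\circ-\in\langle-\rangle$. No topological closure is needed for this part.

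The bulk of the work is in (iv), where I use that $\langle sw \rangle$ is closed and approximate $f$ by finite data. First I verify that for every finite $B \subseteq D$, the function $sw_B$ exists in Sym$(D)$ and lies in $\langle sw \rangle$: it is a composition of single-vertex switches $sw_{b_i}$, each of which exists because $\mt$ is closed under $sw$ (so every intermediate digraph is isomorphic to $(D,E)$), and each of which is a conjugate of $sw$ by an element of Aut$(D,E)$ by homogeneity, hence lies in $\langle sw \rangle$. Next, given $f$ behaving like $sw_A$ and any finite $F\subseteq D$, set $B \defeq A\cap F$. Then $f|_F$ and $sw_B|_F$ induce isomorphic finite digraphs on their images, since both preserve the induced structure on $A\cap F$ and on $A^c\cap F$ and both reverse all edges between these two sets; by homogeneity there is $g\in$ Aut$(D,E)$ with $g\circ sw_B$ agreeing with $f$ on $F$. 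Since $g\circ sw_B\in\langle sw\rangle$ and $F$ was arbitrary, $f$ lies in the topological closure of $\langle sw\rangle$, which equals $\langle sw\rangle$. The main obstacle I anticipate is precisely this finite-switch construction — verifying that $sw_B$ exists and lies in $\langle sw\rangle$ for every finite $B$ — since one must iteratively perform single-vertex switches while checking at each stage that the resulting digraph remains in Forb$(\mt)$ and that the composed map remains in $\langle sw\rangle$.
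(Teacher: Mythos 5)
The paper omits its own proof of this lemma, citing \cite{aga14} for the analogous arguments, so there is no in-paper proof to compare against. Your outline is the expected argument: (i) and (ii) by uniqueness of the \fraisse limit, (iii) by factoring $f$ through $-$ and an automorphism, and (iv) by reducing to finitely supported switches and invoking that $\langle sw\rangle$ is closed. Part (iii) is clean and complete as written.

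Two points need tightening. In (iv), the assertion that $sw_B$ for finite $B=\{b_1,\ldots,b_k\}$ ``is a composition of single-vertex switches $sw_{b_i}$'' is imprecise: if $\sigma_1$ is a bijection behaving like $sw_{\{b_1\}}$, it moves $b_2$ elsewhere, so the next factor must switch around the vertex $\sigma_1(b_2)$ rather than around $b_2$. Taken literally, $\sigma_k\circ\cdots\circ\sigma_1$ with $\sigma_i$ behaving like $sw_{\{b_i\}}$ does not behave like $sw_B$; you must re-target the switching vertex at each stage (or, equivalently, run the induction on $|B|$ using the image of $b_{i}$ under the composite so far). The repair is routine but the claim as stated is false. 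In (ii), ``proved identically'' undersells the work: reversing all edges acts uniformly, but a single-vertex switch acts differently on subdigraphs according to whether they contain the distinguished vertex. To conclude that the switched structure $(D,E_a)$ is again the \fraisse limit of Forb$(\mt)$ one must check both that its age is exactly Forb$(\mt)$ (which uses that one can always embed a given finite digraph avoiding $a$) and that it satisfies the extension property (which requires a small detour through a two-point extension to stay away from $a$). Likewise, in the converse direction one should say why closure of Forb$(\mt)$ under single-vertex switches forces closure of the minimal anti-chain $\mt$; this is a short argument by minimality but is not literally identical to the $-$ case. With these repairs the proof is correct.
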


\begin{lem} \label{equalsSymD} Let $G \leq$ Sym$(D)$ be a closed supergroup of Aut$(D,E)$.
\begin{enumerate}[(i)]
\item If $G$ is $n$-transitive for all $n \in \mathbb{N}$, then $G=$ Sym$(D)$. Note that $G$ is $n$-transitive if for all pairs of tuples $\bar{a}, \bar{b} \in D^n$, there exists $g \in G$ such that $g(\bar{a})=\bar{b}$.
\item If $G$ is $n$-homogeneous for all $n \in \mathbb{N}$, then $G=$ Sym$(D)$. Note that $G$ is $n$-homogeneous if for all subsets $A,B \subset D$ of size $n$, there exists $g \in G$ such that $g(A)=B$.
\item Suppose that whenever $A \subset D$ is finite and has edges, there exists $g \in G$ such that $g(A)$ has less edges than in $A$. Then $G=$ Sym$(D)$.
\item Suppose that there exists a finite $A \subset D$ and $g \in G$ such that $g$ behaves like $id$ on $D\backslash A$, $g$ behaves like $id$ between $A$ and $D\backslash A$, and, $g$ deletes at least one edge in $A$. Then, $G=$ Sym$(D)$.
\end{enumerate}
\end{lem}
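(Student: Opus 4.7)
I would prove the four parts in the order (i), (iii), (ii), (iv), each time reducing the later parts to earlier ones.

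\emph{Part (i)} is purely topological. Given any $f \in \mathrm{Sym}(D)$ and any finite $S = \{s_1, \dots, s_n\} \subset D$, $n$-transitivity applied to the tuples $(s_1, \dots, s_n)$ and $(f(s_1), \dots, f(s_n))$ (both consist of distinct elements) produces $g \in G$ agreeing with $f$ on $S$. Hence $f$ lies in the pointwise-convergence closure of $G$, and since $G$ is closed, $f \in G$.

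\emph{Part (iii)} reduces to (i) by upgrading edge-reduction to $n$-transitivity. Iterating the hypothesis shows that every finite subset of $D$ can be mapped by some element of $G$ onto a finite \emph{independent} set (one with no edges). Given $\bar a, \bar b \in D^n$, pick $g_1, g_2 \in G$ with $g_1(\bar a)$ and $g_2(\bar b)$ independent; any bijection between two finite independent subsets of $D$ is a partial isomorphism of $(D,E)$, so homogeneity of $(D,E)$ extends the map $g_1(\bar a)_i \mapsto g_2(\bar b)_i$ to some $h \in \mathrm{Aut}(D,E) \subseteq G$, and $g_2^{-1} h g_1 \in G$ sends $\bar a$ to $\bar b$. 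This gives $n$-transitivity for every $n$, and (i) closes the argument.

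\emph{Part (ii)} reduces to (iii). Since any finite independent set embeds no tournament at all, it lies in $\mathrm{Forb}(\mt)$, so $D$ contains independent sets of every finite size. Given a finite $A$ with at least one edge, choose an independent $B \subset D$ of the same cardinality; by $|A|$-homogeneity some $g \in G$ sends $A$ onto $B$, strictly reducing the edge count.

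\emph{Part (iv)} also reduces to (iii), by conjugating the given $g$ through automorphisms of $(D,E)$. Fix an edge $(a_1, a_2) \subseteq A$ that $g$ sends to a non-edge. For an arbitrary finite $B$ with an edge $(b_1, b_2)$, use homogeneity of $(D,E)$ together with $\aleph_0$-categoricity to pick $\varphi \in \mathrm{Aut}(D, E)$ with $\varphi(a_i) = b_i$ and $\varphi(A) \cap B = \{b_1, b_2\}$: once $b_1, b_2$ are fixed, the orbit of $A \setminus \{a_1, a_2\}$ under the pointwise stabilizer $\mathrm{Aut}(D,E)_{b_1, b_2}$ is infinite, so a realization disjoint from the finite set $B \setminus \{b_1, b_2\}$ can be chosen. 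Now $h \defeq \varphi g \varphi^{-1} \in G$ behaves like the identity on $D \setminus \varphi(A)$ and between $\varphi(A)$ and its complement, so it preserves the edge/non-edge status of every pair in $B$ other than $(b_1, b_2)$, while collapsing $(b_1, b_2)$ to a non-edge. Hence $h(B)$ has strictly fewer edges than $B$, and (iii) delivers $G = \mathrm{Sym}(D)$.

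\emph{Main obstacle.} The one genuinely delicate step is the disjointness arrangement $\varphi(A) \cap B = \{b_1, b_2\}$ in (iv): one must know that even after pinning $b_1, b_2$, the copies of $A$ extending them form an infinite $\mathrm{Aut}(D,E)_{b_1,b_2}$-orbit so that the finite ``obstacle'' $B \setminus \{b_1, b_2\}$ can be avoided. This is where the richness of the Henson digraph actually enters. A smaller subtlety, in (iii), is that the final bridging step between two independent tuples uses homogeneity of $(D,E)$ itself (via $\mathrm{Aut}(D,E) \subseteq G$), not just an assumed property of $G$.
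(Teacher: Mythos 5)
Your proof is correct and follows the natural path one would expect (and that the paper implicitly defers to by citing \cite{aga14}): establish (i) as a purely topological fact, then reduce the remaining parts to it through the observation that finite independent sets are pairwise isomorphic and that $\mathrm{Aut}(D,E) \subseteq G$ lets one freely permute them. The reduction chain (iii)$\to$(i), (ii)$\to$(iii), (iv)$\to$(iii) is sound, and the one delicate point you correctly flag in (iv) — arranging $\varphi(A) \cap B = \{b_1,b_2\}$ — is handled by the no-algebraicity of Henson digraphs, which follows from their free amalgamation (any tournament in a free amalgam sits wholly inside one factor, so freely amalgamating $C\cup\bar{e}$ with itself over $C$ produces a second disjoint realization of the type of $\bar{e}$ over $C$, and iterating gives infinitely many).
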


\textbf{Terminology}. Let $a_1,\ldots,a_n, b_1, \ldots, b_n \in D$. We say $\bar{a}$ and $\bar{b}$ are isomorphic as graphs if $\bar{E}(a_i,a_j) \leftrightarrow \bar{E}(b_i,b_j)$ for all $i,j$.

\begin{lem} \label{understandingN} Let $G \leq$ Sym$(D)$ be a closed supergroup of Aut$(D,E)$.
\begin{enumerate}[(i)]
\item Suppose that whenever $\bar{a}$ and $\bar{b}$ are isomorphic as graphs, there exists  $g \in G$ such that $g(\bar{a})=\bar{b}$. Then $G \geq$ Aut$(D,\bar{E})$.
\item Suppose that for all $A=\{a_1,\ldots,a_n\} \subset D$, there exists $g \in G$ such that for all edges $a_ia_j$ in $A$, $E(g(a_i),g(a_j))$ iff $i<j$. (Intuitively, such a $g$ is aligning the edges so they all point in the same direction.) Then, $G \geq$ Aut$(D,\bar{E})$.
\item Suppose that for all finite $A \subset D$ and all edges $aa' \in A$ there is $g \in G$ such that $g$ changes the direction of $aa'$ and behaves like $id$ on all other edges and non-edges of $A$. Then $G \geq$ Aut$(D,\bar{E})$.
\item Suppose there is a finite $A \subset D$ and a $g \in G$ such that $g$ behaves like $id$ on $D \backslash A$, $g$ behaves like $id$ between $A$ and $D \backslash A$, and $g$ switches the direction of some edge in $A$. Then, $G \geq$ Aut$(D,\bar{E})$.
\end{enumerate}
Furthermore, in all of these cases we can also conclude that the underlying graph $(D,\bar{E})$ is homogeneous.
\end{lem}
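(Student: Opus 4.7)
The plan is to handle all four parts uniformly: first show that each of (ii), (iii), (iv) implies the hypothesis of (i), and then derive both conclusions --- the inclusion $G \geq$ Aut$(D,\bar{E})$ and the homogeneity of $(D,\bar{E})$ --- from (i) alone.

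The reductions to (i) all use the homogeneity of $(D,E)$ as the closing step. For (ii), given graph-isomorphic tuples $\bar{a},\bar{b}$, I would enumerate them compatibly and apply the alignment hypothesis to each to obtain elements of $G$ sending $\bar{a}$ and $\bar{b}$ respectively to tuples in which every edge points ``forward'' along the enumeration; these aligned images have the same digraph type, so an element of Aut$(D,E) \leq G$ links them, and composition yields the desired element of $G$ mapping $\bar{a}$ to $\bar{b}$. For (iii), the edges on which $\bar{a}$ and $\bar{b}$ differ in orientation form a finite set, and composing the local single-edge flippers supplied by the hypothesis produces an element of $G$ sending $\bar{a}$ to a tuple digraph-isomorphic to $\bar{b}$; homogeneity of $(D,E)$ then finishes the argument. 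For (iv), conjugating the single local flipper by elements of Aut$(D,E)$ (which acts transitively on digraph-isomorphic configurations by homogeneity) gives flippers for arbitrary edges of arbitrary finite $A' \subset D$, reducing (iv) to (iii).

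Given (i), the inclusion $G \geq$ Aut$(D,\bar{E})$ follows by a short closure argument: for $h \in$ Aut$(D,\bar{E})$ and any finite tuple $\bar{a}$, the tuples $\bar{a}$ and $h(\bar{a})$ are graph-isomorphic, so (i) provides some $g \in G$ agreeing with $h$ on $\bar{a}$; closedness of $G$ in the pointwise convergence topology then forces $h \in G$.

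To deduce that $(D,\bar{E})$ is homogeneous, I would extend an arbitrary finite graph-isomorphism $f : A \to B$ to an automorphism of $(D,\bar{E})$ by a standard back-and-forth argument. The crucial step is the extension property: given a partial graph-isomorphism $A' \to B'$ and a vertex $d \in D$, one must produce $e \in D$ whose graph-relations with $B'$ match those of $d$ with $A'$. I would pick any digraph in Forb$(\mathcal{T})$ whose underlying graph is that induced on $A' \cup \{d\}$, use the \fraisse extension property of $(D,E)$ to realise a copy extending some suitably oriented version of $B'$, and then use (i) to reorient and obtain the actual $e$ extending $B'$. The main obstacle is precisely this extension step, because the digraph lift chosen for $A' \cup \{d\}$ need not be directly compatible with the orientations already present on $B'$; the argument must use (i) in tandem with the \fraisse property to show the required copies exist. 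This mirrors the combinatorial arguments in Section~3 of \cite{aga14}, from which this lemma is adapted.
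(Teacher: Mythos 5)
Your derivation of the inclusion $G \geq \mathrm{Aut}(D,\bar{E})$ is essentially right in structure: reduce (ii)--(iv) to (i), and from (i) conclude the inclusion by a pointwise-closure argument. Two small points there. First, in (ii)$\to$(i) you also need the aligning $g$ to preserve the non-edges of $A$ (so that the aligned images of $\bar a$ and $\bar b$ really have the same digraph type); this is implicit in the statement but you should say it. Second, in (iv)$\to$(iii) the $g$ given by (iv) is not a ``single local flipper'': it is only required to switch \emph{some} one edge $cd$ and to behave like $id$ outside $A$ and between $A$ and its complement, but it may do arbitrary other things inside $A$. The conjugation still works, but you must choose $h\in\mathrm{Aut}(D,E)$ so that $h(A)$ meets the target finite set $C$ \emph{only} in the pair $\{h(c),h(d)\}$ and so that there are no edges between $h(A)\setminus\{h(c),h(d)\}$ and $C\setminus\{h(c),h(d)\}$; then whatever else $hgh^{-1}$ does inside $h(A)$ is invisible to $C$. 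That such a copy of $A$ exists in $D$ is a free-amalgamation check (any tournament in the union lies inside $C$ or inside $h(A)$).

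The real gap is the ``Furthermore'' clause. Your plan is to deduce the homogeneity of $(D,\bar{E})$ from (i) alone, but this cannot work: the hypothesis of (i) is satisfied \emph{trivially} by $G=\mathrm{Sym}(D)$ (which is $n$-transitive for all $n$), and yet $(D,\bar{E})$ is not homogeneous for every Henson digraph --- Theorem~\ref{main}(iii) explicitly allows for the non-homogeneous case, and Section~3 constructs uncountably many such examples. So homogeneity is simply not a consequence of (i)'s hypothesis. The information you throw away by passing to (i) is precisely what is needed: in (ii), (iii), (iv) the witnessing $g$ is a \emph{permutation of $D$}, so $g(A)$ is again a finite substructure of $D$ and hence lies in $\mathrm{Forb}(\mathcal{T})$. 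For (iii) this says $\mathrm{Forb}(\mathcal{T})$ is closed under reversing a single edge of any member, hence under arbitrary re-orientation; from this, $\mathrm{Age}(D,\bar{E})$ is a free amalgamation class of graphs, and the one-point extension property of $(D,\bar{E})$ follows directly from that of $(D,E)$ (given an embedded $\Gamma$ and a one-point graph extension $\Gamma'$ in the age, orient the new edges arbitrarily, note the result is still in $\mathrm{Forb}(\mathcal{T})$, and use the extension property of the digraph). For (iv), first obtain (iii) by the conjugation above and then argue as for (iii). For (ii) the analogous closure statement is that every linear re-orientation of each member of $\mathrm{Forb}(\mathcal{T})$ lies in $\mathrm{Forb}(\mathcal{T})$, and a similar but slightly more careful argument gives the extension property. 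In short: prove homogeneity directly from (ii)/(iii)/(iv) by turning the group-theoretic hypothesis into a closure property of $\mathrm{Forb}(\mathcal{T})$; reducing to (i) first is a dead end for that half of the lemma, and your ``main obstacle'' paragraph is gesturing at a step that genuinely fails rather than one that merely needs more work.
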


\subsection{Analysis of Canonical Functions}
To help motivate the analysis we are about to undertake, we sketch a part of the proof of the main theorem. One task will be to show that if $G>$Aut$(D,E)$ then $G>\langle - \rangle$ or $G>\langle sw \rangle$. Since $G>$Aut$(D,E)$, $G$ does not preserve the relation $E$, so there exist $g \in G$ and $c_1,c_2 \in D$ witnessing this. Then by \autoref{blackbox}, we find a canonical function $f: (D,E,<,c_1,c_2) \to (D,E)$ that agrees with $g$ on $(c_1,c_2)$ and which is generated by $G$. The behaviour of $f$ will give us information about $G$. We only have to consider the behaviour of $f$ on the 2-types, since $(D,E,<,c_1,c_2)$ has quantifier elimination and all relations are of arity $\leq 2$. Therefore $f$ has finitely many possible behaviours, so we can check each case and show that $G$ must contain $\langle - \rangle$ or $\langle sw \rangle$.

\subsubsection{Canonical functions from $(D,E,<)$}
We start our analysis of the behaviours with the simplest case, which is when no constants are added.

\textbf{Notation and facts.} \begin{itemize}
\item Let $\phi_1(x,y),\ldots,\phi_n(x,y)$ be formulas. We let $p_{\phi_1,\ldots,\phi_n}(x,y)$ denote the (partial) type determined by the formula $\phi_1(x,y) \wedge \ldots \wedge \phi_n(x,y)$.
\item There are three 2-types in $(D,E)$: $p_{E},p_{E^*}$ and $p_{N}$.
\item There are six 2-types in $(D,E,<)$: $p_{<,E},p_{<,E^*}, p_{<,N}$, $p_{>,E},p_{>,E^*}$ and $p_{>,N}$.
\end{itemize}

\begin{lem} \label{noconstants} Let $G$ be a closed supergroup of Aut$(D,E)$, let $f \in \tmcl(G)$, and let $f$ be canonical when considered as a function from $(D,E,<)$ to $(D,E)$.
\begin{enumerate}[(i)]
\item If $f(p_{<,N})=p_N, f(p_{<,E})=p_{E^*}$ and $f(p_{<,E^*})=p_{E}$, then $- \in G$. In particular, $-$ exists.
\item If $f(p_{<,N})=p_N, f(p_{<,E})=p_{E}$ and $f(p_{<,E^*})=p_{E}$, then $(D,\bar{E})$ is a homogeneous graph and $G \geq$ Aut$(D,\bar{E})$.
\item If $f(p_{<,N})=p_N, f(p_{<,E})=p_{E^*}$ and $f(p_{<,E^*})=p_{E^*}$, then $(D,\bar{E})$ is a homogeneous graph and $G \geq$ Aut$(D,\bar{E})$.
\item If $f(p_{<,N})=p_E$ or $p_{E^*}, f(p_{<,E})=p_{N}$ and $f(p_{<,E^*})=p_{N}$, then $(D,\bar{E})$ is a homogeneous graph and $G \geq$ Aut$(D,\bar{E})$.
\item If $f$ has any other non-identity behaviour, then either we get a contradiction (i.e. that behaviour is not possible) or $G=$Sym($D$).
\end{enumerate}
\end{lem}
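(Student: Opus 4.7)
The plan is to enumerate the possible behaviors. Since $f$ is canonical from $(D,E,<)$ to $(D,E)$, and the six $2$-types of $(D,E,<)$ come in pairs $(p_{<,\phi}, p_{>,\phi^*})$ related by the symmetry $(x,y) \mapsto (y,x)$, the behavior of $f$ on the $p_{>,\cdot}$-types is determined by that on the $p_{<,\cdot}$-types. The full behavior is therefore captured by the triple $(f(p_{<,E}), f(p_{<,E^*}), f(p_{<,N})) \in \{p_E, p_{E^*}, p_N\}^3$, giving $27$ cases. One is the identity, five correspond to cases (i)--(iv), leaving $21$ for case (v).

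The unifying tool I would use throughout is: because $f \in \tmcl(G)$, for every finite $A \subseteq D$ there is $g \in G$ with $g|_A = f|_A$, so any finitary behaviour exhibited by $f$ on $A$ is inherited by some $g \in G$. For case (i), this observation combined with a back-and-forth argument (using the homogeneity of $(D,E)$ to extend partial $-$-maps and the closedness of $G$ in $\text{Sym}(D)$ to take a limit) produces an honest bijection $\sigma \in G$ behaving like $-$ globally. In particular $(D,E) \cong (D,E^{-1})$, so $-$ exists by \autoref{understanding-andsw}(i), and $\sigma$ equals $-$ up to composition with $\text{Aut}(D,E)$, giving $- \in G$. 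For cases (ii) and (iii) the same approximation immediately verifies the hypothesis of \autoref{understandingN}(ii) (the approximating $g$'s align all edges of any given finite set in a common direction), whose ``Furthermore'' clause also yields homogeneity of $(D,\bar{E})$. For case (iv), $f$ acts on $(D,\bar{E})$ as a ``graph complement'' map; combined with a further application of \autoref{blackbox} to produce an additional canonical function from $f$ (or by direct verification of the hypothesis of \autoref{understandingN}(i) using compositions with $\text{Aut}(D,E)$), this delivers $G \geq \text{Aut}(D,\bar{E})$ together with homogeneity of $(D,\bar{E})$.

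For case (v) I would group the remaining $21$ behaviors as follows. Any behavior with $f(p_{<,\phi}) = p_N$ for some $\phi \in \{E, E^*\}$ that does not compensate by creating edges gives a $g \in G$ that strictly decreases the edge count on some finite subset, so \autoref{equalsSymD}(iii) forces $G = \text{Sym}(D)$. Behaviors sending all three $p_{<,\cdot}$-types into $\{p_E, p_{E^*}\}$ realize $f(D)$ as an infinite sub-tournament of $(D,E)$; this contradicts $\mt$ whenever some finite transitive tournament is forbidden, and otherwise produces enough $n$-homogeneity to invoke \autoref{equalsSymD}(ii) and conclude $G = \text{Sym}(D)$. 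The remaining mixed behaviors (where $f$ both deletes and creates edges in ways inconsistent with cases (i)--(iv)) reduce to one of the above situations after composition with an automorphism, or produce a forbidden tournament as a direct substructure of $f(D)$.

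The main obstacle is case (i): extracting an honest bijection $- \in G$ from a merely injective canonical function $f \in \tmcl(G)$ requires both the back-and-forth construction and the use of closedness of $G$ in $\text{Sym}(D)$, and simultaneously amounts to verifying that $\mt$ is closed under $-$. The case analysis in (v) is long but routine once one notices that \autoref{equalsSymD}(iii) dispatches the bulk of the ``edge-decreasing'' behaviors at once, leaving only a handful of forbidden-tournament-based contradictions to check by hand.
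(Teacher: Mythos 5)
Your overall plan matches the paper's: reduce to the behavior on the three $p_{<,\cdot}$-types, handle (i)--(iv) via \autoref{understanding-andsw} and \autoref{understandingN}, and do a case analysis for (v) using \autoref{equalsSymD}, forbidden tournaments, and compositions of $f$ with itself. Your treatment of (i)--(iii) is essentially the paper's (which just cites those lemmas directly), and for (iv) the paper's cleaner device is to consider $f^2$ and reduce to (ii)/(iii).

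The genuine gap is in your partition of the case-(v) behaviors, specifically the bucket where $f$ sends all three $p_{<,\cdot}$-types into $\{p_E,p_{E^*}\}$. You claim the dichotomy is: a contradiction exactly when $\mathcal{T}$ forbids some transitive tournament, and $n$-homogeneity (hence $G=\mathrm{Sym}(D)$) otherwise. This is not the right split, and the ``$n$-homogeneity'' claim fails for some of these behaviors. Concretely, take the behavior $f(p_{<,N})=p_E$, $f(p_{<,E})=p_E$, $f(p_{<,E^*})=p_{E^*}$ (the paper's Case~2c) and any $\mathcal{T}$ with no transitive tournament, e.g.\ $\mathcal{T}=\{I_6\}$. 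Such an $f$ preserves tournaments and merely fills in missing edges, so it fixes a directed 3-cycle while sending an independent 3-set to $L_3$; iterating $f$ (or composing with automorphisms) never maps the 3-cycle to the empty graph, so the $3$-homogeneity your argument relies on does not follow. In fact this behavior is \emph{always} impossible: take a minimal $T\in\mathcal{T}$, delete one edge to get $A\in\mathrm{Forb}(\mathcal{T})$ (here minimality is used, not transitivity), embed $A$ with the deleted endpoints increasing, and observe $f(A)\cong T$ would embed $T$ in $(D,E)$ --- a contradiction independent of whether $T$ is transitive. Only the ``aligned'' behaviors ($f(p_{<,E})=f(p_{<,E^*})$, both sent to the same edge type) give $n$-transitivity and $G=\mathrm{Sym}(D)$, and that conclusion holds without any hypothesis on $\mathcal{T}$. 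Finally, the ``mixed'' behaviors you defer to (``reduce after composition with an automorphism, or produce a forbidden tournament'') include cases such as $f(p_{<,N})=p_E$, $f(p_{<,E})=p_{E^*}$, $f(p_{<,E^*})=p_N$, where the paper needs the distinct observation that $f^3$ has the identity behavior and $f^2(T)$ is embeddable; your sketch does not indicate how you would discover and handle this, and ``a forbidden tournament as a direct substructure of $f(D)$'' undersells the $f^3$ argument. So the architecture is right, but the key case-(v) dichotomy you stated is incorrect and would not survive being written out.
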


%Remark on (i): Recall that whether $-$ exists or not depends on which tournaments are forbidden.

%Remark on (v): The identity behaviour is the behaviour $p_{<,N} \mapsto p_N, p_{<,E} \mapsto p_{E}$ and $p_{<,E^*} \mapsto p_{E^*}$.

\begin{proof}
(i) It follows from \autoref{understanding-andsw} that $-$ does indeed exist. That $- \in G$ follows straightforwardly from the definitions.

(ii), (iii) These follow straightforwardly using \autoref{understandingN}.

(iv) By considering $f^2$ this case reduces to either (ii) or (iii).

(v) Case 1: $f(p_{<,N})=p_N$.  We are left with the behaviours where $f(p_{<,E})=p_{N}$ or $f(p_{<,E^*})=p_{N}$ (or both), as all the other possibilities have been dealt with above.   Now for any finite $A \subset D$ that has edges, $f(A)$ has less edges than $A$ does. So by \autoref{equalsSymD}, we conclude that $G=$ Sym$(D)$.

Case 2: $f(p_{<,N})=p_{E}$\\
Case 2a: $f(p_{<,E})=p_{E}$ and $f(p_{<,E^*})=p_{E}$. 
For every $\bar{a},\bar{b} \in D^n$, $f(\bar{a}) \cong f(\bar{b}) \cong L_n$ (the $n$-element linear order), so $G$ is $n$-transitive for all $n$, so $G=$ Sym$(D)$.

Case 2b: $f(p_{<,E})=p_{E^*}$ and $f(p_{<,E^*})=p_{E^*}$. Consider $f^2$ and use the same argument as in Case 2a to show that $G=$ Sym$(D)$.

Case 2c: $f(p_{<,E})=p_{E}$ and $f(p_{<,E^*})=p_{E^*}$. We will show that this behaviour is not possible. Let $T \in \mathcal{T}$ be of minimal cardinality. Enumerate $T$ as $T=(t_1,\ldots,t_n)$ so that we have an edge going from $t_1$ to $t_2$ (as opposed to $t_2$ to $t_1$).  Now let $A=(a_1,\ldots,a_n)$ be the ordered digraph constructed as follows: Start with $T$, delete the edge $t_1t_2$, and add a linear order so that $a_1 < a_2$.  As $T$ was minimal, $A$ can be embedded in $(D,E,<)$, so then $f(A) \subset (D,E)$. But by the construction of $A$, $f(A) \cong T$, so we have shown that $T$ is embeddable in $(D,E)$. This contradicts that $T \in \mathcal{T}$.

Case 2d: $f(p_{<,E})=p_{E^*}$ and $f(p_{<,E^*})=p_{E}$. Considering $f^2$ reduces to a case that is dual to Case 2c.

Case 2e: $f(p_{<,E})=p_{E}$ and $f(p_{<,E^*})=p_{N}$. Considering $f^2$ reduces to Case 2a.

Case 2f: $f(p_{<,E})=p_{N}$ and $f(p_{<,E^*})=p_{E}$. Considering $f^2$ reduces to Case 1. 

Case 2g: $f(p_{<,E})=p_{E^*}$ and $f(p_{<,E^*})=p_{N}$. We will show that this behaviour is not possible. Let $T \in \mathcal{T}$ be of minimal cardinality. Observe that $f^3$ has the identity behaviour, so that $f^3(T) = T$.  Now observe that $f^2(T)$ is a digraph that contains non-edges, so by the minimality of $T$, $f^2(T)$ can be embedded in $(D,E,<)$. But then applying $f$ shows that $f(f^2(T))$ is embeddable in $(D,E)$, i.e. that $f^3(T) = T$ is embeddable in $(D,E)$. This contradicts that $T \in \mt$.

Case 2h: $f(p_{<,E})=p_{N}$ and $f(p_{<,E^*})=p_{E^*}$. Using the same argument as in 2g shows that this case is not possible.

Case 3: $f(p_{<,N})=p_{E^*}$. This case is is symmetric to Case 2.
\end{proof}

\subsubsection{Canonical functions from $(D,E,<,\bar{c})$}
We now move on to the general situation where we have added constants $\bar{c} \in D$ to the structure. For convenience, we assume that $c_i<c_j$ for all $i<j$. Since $(D,E)$ is $\aleph_0$-categorical, $(D,E,\bar{c})$ is also $\aleph_0$-categorical, so the $n$-types of $(D,E,<,\bar{c})$ correspond to the orbits of Aut$(D,E,<,\bar{c})$ acting on the set of $n$-tuples of $D$.  For this reason, we often conflate the notion of types and orbits.

We need to describe the 2-types of $(D,E,<,\bar{c})$, and to do that we first need to describe the 1-types. There are two kinds of 1-types, i.e. two kinds of orbits. The first is a singleton, e.g. $\{c_1\}$. The other orbits are infinite and are determined by how their elements are related to the $c_i$, e.g., one of the infinite orbits could be $\{x \in D: x<c_1 \wedge \bigwedge_i E(x,c_i)\}$. 

Unlike in the case of the generic digraph, these orbits will not necessarily be isomorphic to the original structure. For example, let $\mathcal{T}=\{L_3\}$ and $\bar{c}=(c_1)$.  Then consider the orbit $X=\{x \in D: x <c_1 \wedge E(x,c_1)\}$.  If there was an edge, $ab$ say, in $X$, then $\{c_1,a,b\}$ would be a copy of $L_3$. However, $L_3$ is forbidden. Thus, $X$ contains no edges so in particular $X$ is not isomorphic to $(\dt,\et,<)$.

However, there are some orbits that are isomorphic to the original structure. For example, regardless of $\mt$, the orbit $X=\{x \in D: x<c_1 \wedge \bigwedge_i N(x,c_i)\}$ is isomorphic to $(D,E,<)$. These orbits form a central part of the argument so we give them a definition.

\begin{Def} Let $\bar{c} \in D$ and $X \subset D$ be an orbit of $(D,E,<,\bar{c})$. We say $X$ is \emph{independent} if $X$ is infinite and there are no edges between $\bar{c}$ and $X$.
\end{Def}

The following lemma highlights the key feature of independent orbits that makes them useful.

\begin{lem} \label{indorb} Let $X$ be an independent orbit of $(D,E,<,\bar{c})$ and let $v \in D \backslash (X \cup \bar{c})$. Let $A=(a_0,\ldots,a_n)$ be a finite digraph embeddable in $D$. Then there are $x_1,\ldots,x_n \in X$ such that $(a_0,a_1,\ldots,a_n) \cong (v,x_1,\ldots,x_n)$.
\end{lem}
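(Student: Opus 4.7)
The plan is to realise the tuple by constructing a single auxiliary ordered digraph $B$: encode the desired relations in $B$, check $B$ lies in the age of $(D,E,<)$, embed $B$ into $D$ with $\bar{c}$ fixed using homogeneity, and finally apply homogeneity once more to send the image of $v$ back to $v$ itself (the auxiliary witnesses $y_i$ being automatically moved into $X$).

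First I would build $B$ on the vertex set $\bar{c}\cup\{v,y_1,\ldots,y_n\}$ as follows. On $\bar{c}\cup\{v\}$ take the ordered digraph induced from $(D,E,<)$; on $\{v,y_1,\ldots,y_n\}$ take the digraph $A$ via the identification $v\leftrightarrow a_0$ and $y_i\leftrightarrow a_i$, equipped with any linear order; between $\bar{c}$ and $\{y_1,\ldots,y_n\}$ put no edges, together with the same order of each $y_i$ relative to each $c_j$ as for any element of $X$ (well-defined because $X$ is a single orbit of $\mathrm{Aut}(D,E,<,\bar{c})$).

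The crux is verifying that $B$ lies in the age of $(D,E,<)$, i.e.\ that no $T\in\mt$ embeds into the underlying digraph of $B$. This is where independence is used: a tournament in $B$ cannot have vertices on both sides of the partition $\bar{c}$ vs.\ $\{y_1,\ldots,y_n\}$, because there are no edges across that partition. Hence the vertex set of any tournament in $B$ is contained in $\bar{c}\cup\{v\}$ or in $\{v,y_1,\ldots,y_n\}$; both carry digraphs which embed into $(D,E)$ --- the first tautologically, the second because $A$ does by hypothesis --- so neither can be a forbidden $T\in\mt$.

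The rest is routine. Embed $B$ into $(D,E,<)$ and, using homogeneity of $(D,E,<)$, compose with an automorphism so that the image of $\bar{c}$ is literally $\bar{c}$, yielding $v',y_1',\ldots,y_n'\in D$ with the prescribed relations to $\bar{c}$. In particular each $y_i'\in X$, and $v'$ has the same $(D,E,<)$-type over $\bar{c}$ as $v$, so homogeneity again provides $\alpha\in\mathrm{Aut}(D,E,<,\bar{c})$ with $\alpha(v')=v$; then $x_i:=\alpha(y_i')\in X$ (as $X$ is an orbit of this group) and $(v,x_1,\ldots,x_n)\cong(a_0,\ldots,a_n)$ as digraphs, as required. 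The only real obstacle is the tournament check above; everything else is a standard double invocation of homogeneity.
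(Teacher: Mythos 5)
Your proof is correct and follows the same approach as the paper: construct an auxiliary ordered digraph with no edges between the copies of $\bar{c}$ and the new $X$-vertices, observe that any tournament must therefore lie entirely on one side of that partition and hence is already in the age of $(D,E)$, and conclude by homogeneity. One small imprecision: ``equipped with any linear order'' on $\{v,y_1,\ldots,y_n\}$ should read ``a compatible linear order'', since when $v$ and $X$ sit in different gaps of $\bar{c}$ the order of $v$ relative to the $y_i$ is forced by the order constraints you impose on $\bar{c}\cup\{v\}$ and between $\bar{c}$ and the $y_i$; the paper's ``adding a suitable linear order'' captures this.
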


\begin{proof} Let $k$ be the length of the tuple $\bar{c}$. Consider the finite digraph $A'$ which is constructed as follows: start with $A$, add new vertices $c_1',\ldots,c_k'$ and then add edges so that we have $(a_0,c_1',\ldots,c_k') \cong (v,c_1,\ldots,c_k)$ and so that there are no edges between $a_j$ and $c_i$ for all $i$ and all $j>0$. 

Any tournament embeddable in $A'$ must be embeddable in either $\{a_0,a_1,\ldots,a_n\}$ or $\{a_0,c_1',\ldots,c_k'\}$, which are both in the age of $(D,E)$. Therefore $A'$ is also in the age of $(D,E)$. Adding a suitable linear order we get a ordered digraph that is isomorphic to the desired  $(v,x_1,\ldots,x_n) \cup \bar c$. By the homogeneity of $(D,E,<)$ we are done.
\end{proof}

\textbf{Notation} Let $A,B$ be definable subsets of $D$ and let $\phi_1(x,y),\ldots,\phi_n(x,y)$ be formulas. We let $p_{A,B,\phi_1,\ldots,\phi_n}(x,y)$ denote the (partial) type determined by the formula $x \in A \wedge y \in B \wedge \phi_1(x,y) \wedge \ldots \wedge \phi_n(x,y)$.

Using this notation, we can describe the 2-types of $(D,E,<,\bar{c})$. They are all of the form $p_{X,Y,\phi, \psi}= \{(a,b) \in D: a \in X, b \in Y, \phi(a,b)$ and $\psi(a,b)\}$, where $X$ and $Y$ are orbits, $\phi \in \{<,>\}$ and $\psi \in \{E,E^*,N\}$.

Our task now is to analyse the possibilities for $f(p_{X,Y,\phi,\psi})$, where $f$ is a canonical function. It turns out that it is sufficient to study those cases where we assume $X$ is an independent orbit. The first lemma deals with the situation when $X=Y$.

\begin{lem} \label{oneorbit} Let $G$ be a closed supergroup of Aut$(D,E)$, let $\bar{c} \in D$, let $f \in \tmcl(G)$, and let $f$ be canonical when considered as a function from $(D,E,<,\bar{c})$ to $(D,E)$.  Let $X \subset D$ be an independent orbit.
\begin{enumerate}[(i)]
\item If $f(p_{X,X,<,N})=p_N, f(p_{X,X,<,E})=p_{E^*}$ and $f(p_{X,X,<,E^*})=p_{E}$, then $- \in G$. In particular, $-$ exists.
\item If $f(p_{X,X,<,N})=p_N, f(p_{X,X,<,E})=p_{E}$ and $f(p_{X,X,<,E^*})=p_{E}$, then $(D,\bar{E})$ is a homogeneous graph and $G \geq$ Aut$(D,\bar{E})$.
\item If $f(p_{X,X,<,N})=p_N, f(p_{X,X,<,E})=p_{E^*}$ and $f(p_{X,X,<,E^*})=p_{E^*}$, then $(D,\bar{E})$ is a homogeneous graph and $G \geq$ Aut$(D,\bar{E})$.
\item If $f(p_{X,X,<,N})=p_E$ or $p_{E^*}, f(p_{X,X,<,E})=p_{N}$ and $f(p_{X,X,<,E^*})=p_{N}$, then $(D,\bar{E})$ is a homogeneous graph and $G \geq$ Aut$(D,\bar{E})$.
\item If $f$ has any other non-identity behaviour, then either we get a contradiction or $G=$Sym($D$).
\end{enumerate}
\end{lem}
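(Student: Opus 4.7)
The plan is to mimic the case analysis of \autoref{noconstants}, taking advantage of the fact that every independent orbit $X$, equipped with its induced ordered-digraph structure, is itself isomorphic to the Henson ordered digraph $(\dt,\et,<)$. Indeed, $X$ is infinite and has no edges to $\bar c$, and by \autoref{indorb} together with the homogeneity of $(D,E,<,\bar c)$ one verifies that $X$ is homogeneous with age $\mathrm{Forb}(\mt)$. Consequently $f|_X$ may be viewed as a canonical function from $(\dt,\et,<)$ to $(D,E)$, whose behaviour on the three 2-types of $X$ is exactly that prescribed by the respective case.

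The additional ingredient needed (compared to \autoref{noconstants}) is a way to convert information about $f$ on $X$ into information about $G$ acting on all of $D$. This is achieved by the following trick: given a finite $A \subset D$, one picks $\sigma \in \mathrm{Aut}(D,E) \subseteq G$ with $\sigma(A) \subseteq X$, which is possible because $X$, as a digraph, is isomorphic to $(\dt,\et)$ and hence contains a copy of every finite subdigraph of $D$. Since $f \in \tmcl(G)$ is a pointwise limit of elements of $G$, there is $g \in G$ agreeing with $f$ on $\sigma(A)$, and then $g \circ \sigma \in G$ realises on $A$ exactly the behaviour prescribed by $f$ on $\sigma(A) \subseteq X$. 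Closedness of $G$ in $\mathrm{Sym}(D)$ then produces global elements of $G$: for case (i) one obtains, for every finite $A$, a member of $G$ behaving like $-$ on $A$, so that $- \in G$ (and $-$ exists because $f$ reverses edges on $X$ and $X$ realises every finite element of $\mathrm{Forb}(\mt)$, whence $\mathrm{Forb}(\mt)$, and equivalently $\mt$, is closed under $-$). Cases (ii) and (iii) run analogously but invoke \autoref{understandingN} in place of \autoref{understanding-andsw}, and case (iv) reduces to (ii) or (iii) by considering $f^2$ as in \autoref{noconstants}(iv).

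Case (v) is a subcase-by-subcase repeat of the analysis in \autoref{noconstants}(v), with $X$ in the role of $D$ and the transfer trick supplying the global conclusions: the edge-collapsing behaviours yield members of $G$ strictly reducing the number of edges on arbitrary finite sets, so $G = \mathrm{Sym}(D)$ by \autoref{equalsSymD}(iii); the linear-alignment behaviours give $n$-transitivity of $G$ for every $n$, so again $G = \mathrm{Sym}(D)$ by \autoref{equalsSymD}(i); and the remaining ``rotation'' behaviours are ruled out by the same tournament-minimality argument, using that a minimum-size $T \in \mt$ admits a single-edge modification in $\mathrm{Forb}(\mt)$ which (by \autoref{indorb}) embeds in $X$, so that applying $f$ or an iterate of it yields an embedding of $T$ in $D$ --- a contradiction.

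The main obstacle is the passage from ``$f$ has behaviour $\Phi$ on $X$'' to ``$G$ realises $\Phi$ on every finite subset of $D$''; this is precisely the role played by the move-into-$X$-via-$\sigma$ trick above, whose correctness rests on the universal embedding property of independent orbits furnished by \autoref{indorb}.
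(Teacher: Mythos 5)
Your proposal is correct and follows essentially the same route as the paper, which simply observes that the proof of \autoref{noconstants} goes through verbatim because the independent orbit $X$ is isomorphic to $(D,E,<)$. Your additional care in spelling out the transfer mechanism (embedding an arbitrary finite $A\subset D$ into $X$ via an automorphism and then composing with an approximation to $f$ from $G$) is exactly what is implicit in the paper's one-line proof, and is a valid way to make it precise.
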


\begin{proof} The proof is identical to that of \autoref{noconstants} because $X \cong (D,E,<)$.
\end{proof}

Next we look at the behaviour of $f$ between an independent orbit $X$ and any other orbit $Y$. This task is split depending on how $X$ and $Y$ relate with regard to the linear order.

\textbf{Facts and Notation} There are two ways that two infinite orbits $X$ and $Y$ of Aut($D,E,<,\bar{c})$ can relate to each other with respect to the linear order $<$:
\begin{itemize}
\item All of the elements of one orbit, $X$ say, are smaller than all of the elements of $Y$. This is abbreviated by `$X<Y$'.
\item $X$ and $Y$ are interdense: $\forall x<x' \in X, \exists y \in Y$ such that $x<y<x'$ and vice versa.
\end{itemize}

The next lemma contains the analysis for the case where $X<Y$ or $X>Y$.

\begin{lem} \label{xlessthany}Let $G$ be a closed supergroup of Aut$(D,E)$, let $\bar{c} \in D$, let $f \in \tmcl(G)$, and let $f$ be canonical when considered as a function from $(D,E,<,\bar{c})$ to $(D,E)$.  Let $X \subset D$ be an independent orbit on which $f$ behaves like $id$ and let $Y$ be an infinite orbit such that $X<Y$ or $X>Y$.
\begin{enumerate}[(i)]
\item If $f(p_{X,Y,N})=p_N, f(p_{X,Y,E})=p_{E^*}$ and $f(p_{X,Y,E^*})=p_{E}$, then $sw \in G$. In particular, $sw$ exists.
\item If $f(p_{X,Y,N})=p_N, f(p_{X,Y,E})=p_{E}$ and $f(p_{X,Y,E^*})=p_{E}$, then $(D,\bar{E})$ is a homogeneous graph and $G \geq$ Aut$(D,\bar{E})$.
\item If $f(p_{X,Y,N})=p_N, f(p_{X,Y,E})=p_{E^*}$ and $f(p_{X,Y,E^*})=p_{E^*}$, then $(D,\bar{E})$ is a homogeneous graph and $G \geq$ Aut$(D,\bar{E})$.
\item If $f(p_{X,Y,N})=p_E$ or $p_{E^*}, f(p_{X,Y,E})=p_{N}$ and $f(p_{X,Y,E^*})=p_{N}$, then $(D,\bar{E})$ is a homogeneous graph and $G \geq$ Aut$(D,\bar{E})$.
\item If $f$ has any other non-identity behaviour, then either we get a contradiction or $G=$Sym($D$).
\end{enumerate}
\end{lem}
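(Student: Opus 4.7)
The argument mirrors the case analysis in \autoref{noconstants}, with \autoref{indorb} playing the role of bridge from the $2$-type information on $X\cup Y$ to global consequences. The recurring idea is this: for any finite digraph $A=(a_0,a_1,\ldots,a_n)$ in the age of $(D,E)$, \autoref{indorb} realises $A$ inside $X\cup\{v\}$ for a chosen $v\in Y$, with $v$ in the role of $a_0$. Since $f$ is the identity on $X$ and acts on $p_{X,Y,*}$ as prescribed, the image $f(v,x_1,\ldots,x_n)$ exhibits precisely which modification of $A$ embeds in $(D,E)$; composing with automorphisms of $(D,E)$ and invoking that $G$ is closed, this modification can then be realised by an element of $G$ on any prescribed finite subset of $D$.

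For case (i), $f$ acts on the copy $(v,x_1,\ldots,x_n)$ exactly like $sw_{a_0}$ applied to $A$: edges among the $x_i$ are preserved, and the edges between $v$ and the $x_i$ are switched. Since $A$ and $a_0$ were arbitrary, $\mathcal{T}$ is closed under $sw$, so $sw$ exists by \autoref{understanding-andsw}(ii). To put $sw$ into $G$, fix $a\in D$ and a specific bijection $s\in$ Sym$(D)$ behaving like $sw$ at $a$. For any finite $B\ni a$, \autoref{indorb} and homogeneity supply $\alpha\in$ Aut$(D,E)$ with $\alpha(a)=v$ and $\alpha(B\setminus\{a\})\subseteq X$; then $f\circ\alpha$ embeds $sw_a(B)$ into $D$, and a further $\beta\in$ Aut$(D,E)$ makes $\beta\circ f\circ\alpha|_B = s|_B$. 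As $\beta\circ f\circ\alpha \in \tmcl(G)$, there is $g_B\in G$ agreeing with $s$ on $B$; letting $B$ exhaust $D$, closedness of $G$ gives $s\in G$.

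For cases (ii) and (iii), the analogous construction furnishes, for any finite $A\subset D$ and any $a\in A$, an element of $G$ that acts on $A$ by making $a$ a sink (case (ii)) or source (case (iii)) and preserves all other edges of $A$. Composing such elements in the order $a_1,a_2,\ldots,a_n$ aligns every edge: an edge $a_ia_j$ with $i<j$ gets flipped to point into $a_i$ at step $i$, is untouched at intermediate steps (which involve neither endpoint), and is flipped to point into $a_j$ at step $j$, ending as $E(g(a_i),g(a_j))$. \autoref{understandingN}(ii) now gives both $G\geq$ Aut$(D,\bar{E})$ and the homogeneity of $(D,\bar{E})$. Case (iv) reduces to (ii) or (iii) by passing to $f^2$, exactly as in \autoref{noconstants}(iv).

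Case (v) comprises all remaining non-identity behaviours of $f$ on $\{p_{X,Y,N},p_{X,Y,E},p_{X,Y,E^*}\}$, and is handled exactly as \autoref{noconstants}(v): some behaviours reduce to earlier cases via $f^2$ or $f^3$; others let $f$ strictly decrease edge counts or force $G$ to be $n$-transitive for every $n$, yielding $G=$ Sym$(D)$ via \autoref{equalsSymD}; the remainder are ruled out by using \autoref{indorb} to re-embed a minimal-cardinality forbidden tournament $T\in\mathcal{T}$ into $(D,E)$, contradicting $T\in\mathcal{T}$. The main obstacle will be the bookkeeping in this last class: for each behaviour of $f$ one has to identify the right auxiliary digraph---typically obtained from $T$ by deleting or flipping a single edge, placed inside $X\cup\{v\}$ with $v$ assigned to a carefully chosen vertex---so that one or two applications of $f$ produce $T$, mirroring cases 2c, 2g and their duals in \autoref{noconstants}(v).
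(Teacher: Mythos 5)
Your proposal follows essentially the same approach as the paper's: using \autoref{indorb} to place one vertex in $Y$ and the rest in $X$, applying $f$, and invoking \autoref{understanding-andsw}, \autoref{understandingN} or \autoref{equalsSymD}, with (iv) reduced to (ii)/(iii) via the ``apply $f$, re-embed, apply $f$ again'' trick and (v) handled by a case analysis that embeds a minimal forbidden tournament. The one descriptive imprecision worth flagging is in your sketch of (v): in the impossible-behaviour cases (the analogues of the paper's Cases 2a/2b), the auxiliary digraph is not obtained from $T$ by deleting or flipping a single edge, but by modifying \emph{all} edges adjacent to the distinguished vertex $t_0$ (incoming edges become non-edges, outgoing become incoming), since $f$ acts uniformly on every pair straddling $X$ and $Y$; your plan explicitly defers this bookkeeping, so this is a note rather than a gap.
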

Remark: We do not need to include $<$ or $>$ in the subscripts of the type because it is automatically determined by how $X$ and $Y$ are related to $\bar{c}$.

\begin{proof} Assume that $X<Y$. The proof for the case $Y<X$ is symmetric. Let $y_0 \in Y$ be any element.

(i) The proof is analogous to that of Case (i) in \autoref{noconstants} and is left as an exercise for the reader. Note that \autoref{indorb} is needed for this.

(ii) Using \autoref{understandingN} (ii), it suffices to show that for any finite $A \subset D$ we can align all its edges by using functions in $G$. Let $A=\{a_1,\ldots,a_n\}$. First we map $a_{n-1}$ to $y_0$ and the rest of $A$ into $X$ (possible by \autoref{indorb}), and then apply $f$.  Then we repeat but with $a_{n-2}$ instead of $a_{n-1}$, then with $a_{n-3}$, and so on until $a_1$.

(iii) Same as (ii).

(iv) The same argument as in (ii) works but with a slight modification: the intuition is that whenever $f$ was applied to some tuple $(a_0,\ldots,a_n)$ in those proofs, here we apply $f$ twice to get the same effect. To be more precise, the modification is as follows.  Let $(a_0,\ldots,a_n) \in D$. We first map this to an isomorphic copy $(y_0,x_1,\ldots,x_n)$ for some $x_i \in X$. Then apply $f$. Then again we map this to an isomorphic tuple $(y_0,x_1',\ldots,x_n')$ for some $x_i' \in X$. Then apply $f$ a second time. The total effect of this procedure is the same as what the canonical function did in Case (ii) or (iii). Thus we have reduced this case to either (ii) or (iii).

Remark: For the rest of this proof, we will use the phrase ``by applying $f$ twice'' to refer to the procedure described above.

(v) Case 1: $f(p_{<,N})=p_N$.  By a similar argument as in Case 1 of \autoref{noconstants}, $G$=Sym$(D)$. Note that \autoref{indorb} is needed for this.

Case 2: $f(p_{X,Y,N})=p_{E}$\\
Case 2a: $f(p_{X,Y,E})=p_{E^*}$.  We will show that this behaviour is not possible, in a similar fashion to Case 2c of \autoref{noconstants}. Let $T \in \mt$ be of minimal size and enumerate $T$ as $(t_0,t_1,\ldots,t_n)$ so that $t_0$ has at least one edge going into it. Construct a digraph $A=(a_0,a_1,\ldots,a_n)$ as follows: start with $A$ being equal to $T$ and then replace edges into $a_0$ with non-edges, replace edges out of $a_0$ with incoming edges, and leave all other edges of $A$ the same.

Since $T$ was minimal, $A \in$ Forb$(\mt)$ so $A$ can be embedded in $D$. Furthermore, by \autoref{indorb} there are $x_i \in X$ such that $(a_0,a_1,\ldots,a_n) \cong (y_0,x_1,\ldots,x_n)$.  Now apply $f$. By construction of $A$, $f(y_0,x_1,\ldots,x_n) \cong (t_0,\ldots,t_n)$.  Thus, $T$ is embeddable in $D$, contradicting $T \in \mt$.

Case 2b: $f(p_{X,Y,E^*})=p_{E^*}$. Use the same argument as Case 2a to show this is not possible.

Now there are only three behaviours left to analyse.

Case 2c: $f(p_{X,Y,E})=p_{E}$ and $f(p_{X,Y,E^*})=p_{E}$. We will show that $G=$ Sym$(D)$, by showing that every tuple $(a_0,\ldots,a_{n-1}) \in D^n$ can be mapped to $L_n$ using functions in $G$. We do this by induction on $n$.  The base case $n=1$ is trivial so let $n>1$. By the inductive hypothesis we can assume that $(a_1,\ldots,a_{n-1}) \cong L_{n-1}$. By \autoref{indorb} we map $\bar{a}$ to an isomorphic tuple $(y_0,x_1,\ldots,x_{n-1})$ for some $x_i \in X$. Then applying $f$ maps the tuple to a copy of $L_n$, as required. 

Case 2d: $f(p_{X,Y,E})=p_{E}$ and $f(p_{X,Y,E^*})=p_{N}$. By applying $f$ twice this case is reduced to Case 2c.

Case 2e: $f(p_{X,Y,E})=p_{N}$ and $f(p_{X,Y,E^*})=p_{E}$. By applying $f$ twice this case is reduced to Case 1.

Case 3: $f(p_{X,Y,N})=p_{E^*}$. This case is symmetric to Case 2.
\end{proof}

In the proof above we only had to study the behaviour of $f$ on $\{y_0\} \cup X$ for some fixed $y_0 \in Y$. This was basically a consequence of \autoref{indorb}. We remark that we will use the arguments in this proof with minimal modification to prove subsequent lemmas, including the following, where we have two interdense orbits.

\begin{lem} \label{xyinterdense}
Let $G$ be a closed supergroup of Aut$(D,E)$, let $\bar{c} \in D$, let $f \in \tmcl(G)$, and let $f$ be canonical when considered as a function from $(D,E,<,\bar{c})$ to $(D,E)$.  Let $X \subset D$ be an independent orbit on which $f$ behaves like $id$ and let $Y$ be an infinite orbit such that $X$ and $Y$ are interdense. Then at least one of the following holds.
\begin{enumerate}[(i)]
\item $f$ preserves all the edges and non-edges between $X$ and $Y$
\item $f$ switches the direction of all the edges between $X$ and $Y$. In particular, $sw$ exists.
\item $G\geq$Aut$(D,\bar{E})$ and $(D,\bar{E})$ is a homogeneous graph.
\item $G=$ Sym$(D)$.
\end{enumerate}
\end{lem}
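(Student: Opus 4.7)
My plan is to run a case analysis on the behaviour of $f$ on the six $2$-types $p_{X,Y,\phi,\psi}$ ($\phi \in \{<,>\}$, $\psi \in \{E, E^*, N\}$), mirroring \autoref{noconstants} and \autoref{xlessthany}. Let $\alpha_{\phi,\psi} \defeq f(p_{X,Y,\phi,\psi})$. The key ingredient is a minor strengthening of \autoref{indorb}: by interdensity, for any fixed $y_0 \in Y$ and any finite digraph $A=(a_0,\dots,a_n)$ in the age of $(D,E)$, we can realise $A$ as $(y_0,x_1,\ldots,x_n)$ with each $x_i \in X$ placed on whichever side of $y_0$ in the linear order we choose. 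This follows from the proof of \autoref{indorb}, which only requires a linear order on the new points compatible with orbit constraints, and interdensity leaves the relative order of each $x_i$ and $y_0$ unconstrained.

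The argument splits on whether the behaviour of $f$ is sensitive to the linear order. In the \emph{order-insensitive case} $\alpha_{<,\psi}=\alpha_{>,\psi}$ for every $\psi$, the proof of \autoref{xlessthany} carries over verbatim, since the only use made there of $X<Y$ (or $X>Y$) was that the pair $(x,y_0)$ had a fixed order. The identity pattern $(p_N,p_E,p_{E^*})$ yields (i); the swap pattern $(p_N,p_{E^*},p_E)$ yields (ii) (and $sw \in G$ by \autoref{understanding-andsw}); every other uniform pattern yields (iii) or (iv), or else contradicts the minimality of a forbidden tournament in $\mt$ exactly as in cases 2c and 2g of \autoref{noconstants}.

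In the \emph{order-sensitive case} $\alpha_{<,\psi}\neq \alpha_{>,\psi}$ for some $\psi$, the plan is a single-pair alteration argument. Given any finite $A\subset D$ containing a pair $ab$ of type $\psi$, embed $A$ into $\{y_0\}\cup X$ with $b\mapsto y_0$, $a\mapsto x_a$ placed on the side of $y_0$ where $f$ misbehaves on $\psi$, and every other vertex $c \in A$ placed on the side of $y_0$ where $f$ acts like the identity on the relation between $c$ and $b$. Because $f$ is the identity on pairs within $X$, composing $f$ with appropriate automorphisms realises a $g\in G$ that alters only the pair $ab$ and behaves like the identity elsewhere on $A$. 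If the alteration is a deletion or creation of an edge, \autoref{equalsSymD}(iv) yields (iv); if it is a direction swap, \autoref{understandingN}(iv) yields (iii).

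The main obstacle is bookkeeping: the single-pair alteration requires, for every relation $\psi'$ appearing between $b$ and some other vertex of $A$, that at least one side of $y_0$ realises the identity on $\psi'$. When both $\alpha_{<,\psi'}$ and $\alpha_{>,\psi'}$ are non-identity for the same $\psi'$, the uniform misbehaviour of $f$ is already strong enough to finish the proof on its own: uniformly sending a $\psi'$-pair to a non-edge triggers \autoref{equalsSymD}(iii), giving (iv); uniformly direction-swapping a $\psi'$-pair while non-edges are preserved triggers \autoref{understandingN}, giving (iii); and uniform creation of new edges from non-edges is ruled out via minimum-forbidden-tournament arguments of the style of cases 2a and 2c in the proof of \autoref{xlessthany}. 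Combining the single-pair alteration with these uniform-case tools covers every remaining sub-case.
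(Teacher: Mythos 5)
Your plan uses the same core ingredients as the paper's proof --- the interdensity-strengthened version of \autoref{indorb}, the case analysis from \autoref{xlessthany}, and a single-pair alteration to handle the genuinely new order-sensitive phenomenon --- but the decomposition is different, and the decomposition you chose creates a gap. The paper's route is: apply the full \autoref{xlessthany} case analysis to the increasing tuples alone (possible because interdensity lets one embed all the $X$-vertices below $y_0$), concluding that on increasing tuples $f$ is $id$ or $sw$ unless (iii) or (iv) holds; then do the same for decreasing tuples; and then the only case left is $id$ on one side and $sw$ on the other, which is exactly your single-pair alteration. This reduction to four clean $id$/$sw$ combinations is what eliminates the bookkeeping you are worried about.

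Your plan instead splits into order-insensitive versus order-sensitive, and in the order-sensitive case your fallback subcase (2b) enumerates only three uniform misbehaviours: uniformly $\psi'\to N$, uniformly $\psi'\to\psi'^*$, and uniform edge-creation from $N$. That enumeration silently assumes both $\alpha_{<,\psi'}$ and $\alpha_{>,\psi'}$ misbehave in the \emph{same} way, but your hypothesis in (2b) only says both are non-identity; the case $\alpha_{<,E}=p_{E^*}$ together with $\alpha_{>,E}=p_{N}$ (say, with $\alpha_{\cdot,N}=p_N$) falls outside your list. That case is in fact killed --- placing all $X$-vertices on the decreasing side puts you in Case 1 of \autoref{xlessthany}, which yields $G=\mathrm{Sym}(D)$ --- but you need to notice that interdensity lets you run \autoref{xlessthany} on each side of the order independently. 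Once you add that observation the bookkeeping dissolves, and you are essentially back at the paper's proof.
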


\begin{proof}
First just consider the increasing tuples from $X$ to $Y$. With the same arguments as in \autoref{xlessthany} one can show that either
\begin{enumerate}[(a)]
\item $f(p_{X,Y,N,<})=p_N, f(p_{X,Y,E,<})=p_{E}$ and $f(p_{X,Y,E^*,<})=p_{E*}$,
\item $f(p_{X,Y,N,<})=p_N, f(p_{X,Y,E,<})=p_{E^*}$ and $f(p_{X,Y,E^*,<})=p_{E}$,
\item $G\geq$Aut$(D,\bar{E})$ and $(D,\bar{E})$ is a homogeneous graph, or
\item $G=$ Sym$(D)$.
\end{enumerate}
If (c) or (d) is true we are done, so assume (a) or (b) is true.  Similarly we can assume that $f$ behaves like $id$ or $sw$ between decreasing tuples from $X$ to $Y$. Thus it remains to check, without loss, what happens if $f$ behaves like $id$ on decreasing tuples and $sw$ on increasing tuples. Explicitly we are asssuming that:

$f(p_{X,Y,N,<})=p_N, f(p_{X,Y,E,<})=p_{E^*}$, $f(p_{X,Y,E^*,<})=p_{E}$. and \\
$f(p_{X,Y,N,>})=p_N, f(p_{X,Y,E,>})=p_{E}$,  $f(p_{X,Y,E^*,>})=p_{E^*}$.

Let $\bar a = (a_0,a_1,\ldots,a_n) \in$ Forb($\mt$) be a digraph with at least one edge $E(a_0,a_1)$. We can consider $\bar a$ as an ordered digraph by setting $a_i < a_j \leftrightarrow i < j$. Then by \autoref{indorb} $\bar a$ has an isomorphic copy $\bar b = (b_0,b_1,\ldots, b_n)$ such that $b_1 \in Y$ and $b_i \in X$ for $i \neq 1$. All the edges of $\bar b$ are preserved under $f$, except for the edge $E(b_0,b_1)$ whose direction is switched. By \autoref{understandingN}, we conclude that $G\geq$Aut$(D,\bar{E})$ and $(D,\bar{E})$ is a homogeneous graph.
\end{proof}

We end by looking at how $f$ can behave between the constants $\bar{c}$ and the rest of the structure.

\begin{lem} \label{constants} Let $G$ be a closed supergroup of Aut$(D,E)$, let $(c_1,\ldots,c_n) \in D$, let $f \in \tmcl(G)$, and let $f$ be canonical when considered as a function from $(D,E,<,\bar{c})$ to $(D,E)$.  Suppose that $f$ behaves like $id$ on $D^- \defeq D\backslash \{c_1,\ldots,c_n\}$. Then at least one of the following holds.
\begin{enumerate}[(i)]
\item For all $i, 1\leq i \leq n$, $f$ behaves like $id$ or like $sw$ between $c_i$ and $D^-$.
\item $G\geq$ Aut$(D,\bar{E})$ and $(D,\bar{E})$ is a homogeneous graph.
\item $G=$ Sym$(D)$.
\end{enumerate}
\end{lem}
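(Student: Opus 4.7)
The plan is to fix a constant $c_i$ and analyze orbit by orbit the behavior of $f$ between $c_i$ and $D^-$. By canonicity, for each orbit $X \subset D^-$ of Aut$(D,E,<,\bar c)$, the function $f$ induces a fixed new relation $r'_X \in \{E, E^*, N\}$ between $f(c_i)$ and $x \in X$, while the original relation $r_X$ is determined by $X$. The aim is to show that either $r'_X = r_X$ for every $X$ (the id pattern), or $r'_X$ is the $sw$-swap of $r_X$ for every $X$ (the sw pattern, where edges are reversed and non-edges fixed), else we land in case (ii) or (iii).

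As preparation, fix an independent orbit $Z \subset D^-$; such an orbit exists by taking vertices with all non-edges to $\bar c$ in any fixed linear-order position. The proof of \autoref{indorb} adapts so that the designated vertex may be the constant $c_i$ itself: for any finite digraph $A=(a_0,a_1,\ldots,a_n)$ in Forb$(\mt)$ in which $a_0$ has only non-edges to the other $a_j$, one can realize $(a_0,a_1,\ldots,a_n)$ as $(c_i,z_1,\ldots,z_n)$ with $z_j \in Z$. The argument is essentially that of \autoref{indorb}: one uses the existing $\bar c$ in place of the fresh copy, identifies $a_0$ with $c_i$, and amalgamates.

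The analysis splits into three branches, each a direct analogue of arguments in \autoref{xlessthany}. In the \emph{create-edge branch}, some $X$ has $r_X = N$ but $r'_X \neq N$. Applying the case analysis of \autoref{xlessthany}(v) Cases 2a--2e, transplanted via the adapted \autoref{indorb}, each subcase yields $G=$Sym$(D)$, or $G \geq $ Aut$(D,\bar E)$ with $(D,\bar E)$ homogeneous, or a contradiction with the minimality of a forbidden tournament in $\mt$. In the \emph{delete-edge branch}, some $X$ has $r_X \in \{E,E^*\}$ but $r'_X = N$. Then for any finite edge-containing $A$ we embed one of its edges into $(c_i,x)$ with $x \in X$ and place the remaining vertices into $Z$ and into orbits on which the create-edge branch has already been ruled out (so $f$ introduces no new edges). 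Applying $f$ deletes the chosen edge while preserving the rest, yielding---via $f \in \tmcl(G)$, closedness of $G$, and composition with Aut$(D,E)$---an element of $G$ whose image of $A$ has strictly fewer edges; by \autoref{equalsSymD}(iii), $G=$Sym$(D)$.

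After excluding these two branches we may assume $r'_X \in \{r_X, r_X^*\}$ for every orbit $X$. The \emph{mixed branch} is the remaining possibility: distinct orbits $X_1,X_2$ with $r_{X_1}=r_{X_2}=E$ satisfy $r'_{X_1} = E$ (id) and $r'_{X_2} = E^*$ (sw). Given any finite $A \subset D$ and edge $aa' \in A$, we use Aut$(D,E) \subseteq G$ to map $a \mapsto c_i$, $a' \mapsto x_2 \in X_2$, and the remaining vertices of $A$ into orbits receiving the id treatment, in such a way that the isomorphism type of $A$ is preserved; applying $f$ then flips exactly the edge $(c_i,x_2)$ and preserves every other relation on the image of $A$. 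Extracting a bijection $g \in G$ realizing this behaviour on $A$ verifies the hypothesis of \autoref{understandingN}(iii), whence $G \geq $ Aut$(D,\bar E)$ with $(D,\bar E)$ homogeneous. The symmetric arguments handle the other possible mixings (with $r_X = E^*$, or between an id-orbit and an sw-orbit of opposite type). The main obstacle throughout is the localization step---turning the global canonical action of $f$ into an honest bijection of $G$ that acts like the identity outside a prescribed finite set---and this is handled, as in the preceding lemmas, by combining the closedness of $G$, the flexibility of Aut$(D,E)$, and the careful choice of embedding supplied by our adapted version of \autoref{indorb}.
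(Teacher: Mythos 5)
Your overall strategy (fix one constant $c_i$, analyse the induced map on relations $r_X \mapsto r_X'$, split into create-edge, delete-edge, and mixed branches, and reuse the \autoref{xlessthany} case analysis) is the same in spirit as the paper's, but there is a genuine gap in the preparation step that propagates through all three branches. Your adapted version of \autoref{indorb} only places a finite digraph $(a_0,\dots,a_n)$ with $a_0 \mapsto c_i$ when $a_0$ has \emph{non-edges} to all the other $a_j$ — because the independent orbit $Z$ you chose has only non-edges to $c_i$. The case analysis of \autoref{xlessthany}, however, needs to embed configurations in which the distinguished vertex has \emph{all three} relations (outgoing edge, incoming edge, non-edge) to the surrounding vertices, and then apply $f$. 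With only $Z$ available, you cannot realise, say, the digraph $A$ of Case 2a of \autoref{xlessthany} (which requires both incoming and non-edge relations to the distinguished vertex). Likewise, in the mixed branch you need to send the ``remaining vertices of $A$'' to orbits receiving the id treatment while preserving the isomorphism type of $A$; but those remaining vertices may need edges into or out of $c_i$, and you have not established that an id-treated orbit with each of the three relations to $c_i$ exists. For instance, the scenario where every $E$-orbit receives id and every $E^*$-orbit receives sw is not ruled out by your argument, yet it is neither the id nor the sw pattern required by conclusion (i).

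The paper closes this gap by a specific device you should include: it singles out three orbits $X_{out}$, $X_{in}$, $X_N$ of $(D,E,<,\bar c)$ — all lying below $c_1$, with non-edges to every $c_j$ for $j\ne i$, and with outgoing edge, incoming edge, and non-edge to $c_i$ respectively. Their union plays the role of an independent orbit relative to the single distinguished vertex $c_i$: every finite digraph $(a_0,\dots,a_n) \in$ Forb$(\mt)$ can be realised as $(c_i, x_1,\dots,x_n)$ with $x_j \in X_{out}\cup X_{in}\cup X_N$. Replicating the \autoref{xlessthany} analysis on this union then shows (modulo landing in (ii) or (iii)) that $f$ behaves uniformly like id or like sw between $c_i$ and $X_{out}\cup X_{in}\cup X_N$; in particular, id-treated (or uniformly sw-treated) orbits with each relation to $c_i$ exist. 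Only after establishing this baseline does one compare each remaining orbit $X$ against it, which is where your branch analysis becomes sound. So the missing idea is precisely the choice of a pseudo-independent \emph{union} of orbits covering all three relations to $c_i$, rather than a single independent orbit.
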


\begin{proof} Fix some $i$, $1 \leq i \leq n$. Let $X_{out} = \{x \in D: x<c_1 \wedge E(c_i,x) \wedge \bigwedge_{j \neq i} N(c_j,x)\}$. Define $X_{in}$ and $X_{N}$ similarly, with $E(c_i,x)$ replaced with $E(x,c_i)$ and $N(x,c_i)$ respectively. Then for any finite digraph $(a_0,a_1,\ldots,a_n)$, there exist $x_1,\ldots,x_n \in X_{out} \cup X_{in} \cup X_{N}$ such that $(a_0,a_1,\ldots,a_n) \cong (c_i,x_1,\ldots,x_n)$. So by replicating the proof of \autoref{xlessthany} we can assume that $f$ behaves like $id$ or $sw$ between $c_i$ and $X_{out} \cup X_{in} \cup X_{N}$.  Without loss, we assume $f$ behaves like $id$ - the argument for $sw$ will be the same.

If $f$ behaves like $id$ between $c_i$ and $D^-$ we are done, so suppose there is an infinite orbit $X$ such that $f$ does not behave like $id$ between $c_i$ and $X$. Assume that there are edges from $c_i$ into $X$ - the arguments for the other two cases are similar.

Let $A$ be any finite digraph in the age of $D$ and let $ab$ be any edge in $A$.  Then observe that there is an embedding of $A$ into $D$ such that $a$ is mapped to $c_i$, $b$ is mapped into $X$, and the rest of $A$ is mapped into $X_{out} \cup X_{in} \cup X_{N}$.  Then applying $f$ changes exactly the one edge $ab$ in $A$, so by \autoref{equalsSymD} or \autoref{understandingN} as appropriate, we are done.
\end{proof}

\subsection{Proof of main result}
\maintheorem*

\begin{proof} (i) Suppose for contradiction that $G \not\geq$ Aut$(D,\bar{E})$ and $G \not\leq$ Aut$(D,\bar{E})$. Because of the second assumption $G$ violates the relation $\bar{E}$. By \autoref{blackbox} this can be witnessed by a canonical function. Precisely, this means there are $c_1,c_2 \in D$ and $f \in \tmcl(G)$ such that $f: (D,E,<,c_1,c_2) \to (D,E)$ is a canonical function, $\bar{E}(c_1,c_2)$ and $N(f(c_1),f(c_2))$.

Now let $X$ be an independent orbit of $(D,E,<,c_1,c_2)$.

\textbf{Claim 1.} $f$ behaves like $id$ on $X$.

By \autoref{oneorbit} we know that $f$ behaves like $id$ or $-$ on $X$, otherwise $G$ would contain Aut$(D,\bar{E})$. If $f$ behaves like $-$ on $X$, then we continue by replacing $f$ by $- \circ f$.

\textbf{Claim 2.} $f$ behaves like $id$ between $X$ and every other infinite orbit $Y$.

Let $Y$ be another infinite orbit. By \autoref{xlessthany} and \autoref{xyinterdense}, $f$ behaves like $id$ or $sw$ between $X$ and $Y$, as otherwise $G$ would contain Aut$(D,\bar{E})$. If $f$ behaves like $sw$ between them, then we simply replace $f$ by $sw_Y \circ f$. Note that one needs to check $sw_Y$ is a legitimate function. The reason it may not be is that applying $sw_Y$ could introduce a forbidden a tournament.  However, if it were the case that $sw_Y$ was illegitimate for this reason, then $f$ behaving like $sw$ between $X$ and $Y$ would also have been illegitimate for the same reason.

\textbf{Claim 3.} $f$ behaves like $id$ on every infinite orbit and between every pair of infinite orbits.

Suppose not, so there are infinite orbits $Y_1$ and $Y_2$ (possibly the same) and there are distinct $y_1, y_2 \in Y_1,Y_2$, respectively, such that $(y_1,y_2) \not\cong f(y_1,y_2)$. Now for any finite digraph $(a_1,a_2,\ldots,a_n) \in$ Forb$(\mt)$ with $(y_1,y_2) \cong (a_1,a_2)$, we can find $x_3,\ldots,x_n \in X$ such that $(y_1,y_2,x_3,\ldots,x_n) \cong (a_1,\ldots,a_n)$ (This statement can be verified analogously to \autoref{indorb}). Then $f$ has the effect of only changing what happens between $y_1$ and $y_2$, since we know $f$ behaves like $id$ on $X$ and between $X$ and all other infinite orbits. In short, given any finite digraph, we can use $f$ to change what happens between exactly two of the vertices of the digraph.

There are three options. If $f$ creates an edge from a non-edge, then we we can use $f$ to introduce a forbidden tournament, which gives a contradiction. If $f$ deletes the edge or changes the direction of the edge, then by \autoref{equalsSymD} or \autoref{understandingN}, as appropriate, we get that $G \geq$ Aut$(D,\bar{E})$.

\textbf{Claim 4.} $f$ behaves like $id$ between $\{c_1,c_2\}$ and the union of all infinite orbits.

The follows immediately from \autoref{constants}, composing with $sw_{c_i}$ if necessary.

\textbf{Conclusion.} We can assume that $f$ behaves everywhere like the identity, except on $(c_1,c_2)$, where it maps an edge to a non-edge. But then we get that $G=$ Sym$(D)$ by \autoref{equalsSymD}, completing the proof of (i).

(ii) The proof follows exactly the same series of claims as in part (i) but with minor adjustments to how one starts and concludes. We go through one case as an example, leaving the rest to the reader.  We will show that if Aut$(D,E) < G \leq$ Aut$(D,\bar{E})$, then $G> \langle - \rangle$ or $G > \langle sw \rangle$ (if they exist). Hence $G$ preserves non-edges but not the relation $E$. By \autoref{blackbox}, there is an edge $c_1c_2 \in D$ and a canonical function $f: (D,E,<,c_1,c_2) \to (D,E)$ which changes the direction of the edge $c_1c_2$. Suppose for contradiction that $G \not> \langle - \rangle$ and that $G \not> \langle sw \rangle$

Let $X$ be an independent orbit. By \autoref{oneorbit}, $f$ must behave like $id$ on $X$ and then by \autoref{xlessthany} and \autoref{xyinterdense}, $f$ must behave like $id$ between $X$ and all other infinite orbits. By repeating the argument of Claim 3 above, $f$ must behave like $id$ on the union of infinite orbits and so by \autoref{constants} $f$ must behave like $id$ between the constants and the union of infinite orbits. Now we are in the situation of \autoref{understandingN} (iv), so we conclude that $G \geq$ Aut$(D,\bar{E})$, so $G \geq \langle - \rangle, \langle sw \rangle$.

(iii) $(D,\bar{E})$ embeds every finite empty graph and is connected (\autoref{hensondigraphs} (ii)). Hence, if $(D,\bar{E})$ is a homogeneous graph then $(D,\bar{E})$ has to be the random graph or a Henson graph, by the classification of countable homogeneous graphs (\cite{lw80}).

Thus assume that $(D,\bar{E})$ is not a homogeneous graph. Let $G' \defeq$ max$\{$Aut$(D,E), \langle - \rangle, \langle sw \rangle$, $\langle -,sw \rangle\}$. Now let $G$ be a closed group  such that $G'<G \leq$ Sym$(D)$. We want to show that $G=$ Sym$(D)$. By \autoref{blackbox}, there are $\bar{c} \in D$ and a canonical $f:(D,E,<,\bar{c}) \to (D,E)$ such that $f$ cannot be imitated by any function of $G'$ on $\bar{c}$.  To be precise, we mean that for all $g \in G'$, $g(\bar{c}) \neq f(\bar{c})$.

Now we continue as in (i), proving that we may assume $f$ behaves like $id$ on the union of all infinite orbits and like $id$ between $\bar{c}$ and the union of infinite orbits.  In doing so, we may have composed $f$ with $-$ or $sw_A$ for some $A$. Since $-$ and $sw_A$ are elements of $G'$, these compositions do not change the fact that $f$ could not be imitated by $G'$ on $\bar{c}$. In particular, $f(\bar{c}) \not\cong \bar{c}$. Hence, we are in the situation of either \autoref{equalsSymD} (iv) or \autoref{understandingN} (iv). Thus, either $G=$ Sym$(D)$ and we are done, or $(D,\bar{E})$ is a homogeneous graph - contradiction.

Since Aut$(D,\bar E)$ contains $G'$ and is proper subgroup of Sym$(D)$, we get $G' = $Aut$(D,\bar E)$.
\end{proof}

\section{$2^{\aleph_0}$ pairwise non-isomorphic maximal-closed subgroups of Sym$(\mathbb{N})$}

\begin{Def} Let $G$ be a closed subgroup of Sym$(\mathbb{N})$. We say that $G$ is \emph{maximal-closed} if $G \neq$ Sym$(\mathbb{N})$ and there are no closed groups $G'$ such that $G < G' <$ Sym$(\mathbb{N})$.
\end{Def}

We construct $2^{\aleph_0}$ pairwise non-isomorphic maximal-closed subgroups of Sym$(\mathbb{N})$ by modifying Henson's construction of $2^{\aleph_0}$ pairwise non-isomorphic homogeneous countable digraphs and taking their automorphism groups.  The modification is needed to ensure that the groups are maximal. A short argument will show that the automorphism groups are pairwise non-conjugate. That these groups are pairwise non-isomorphic follows from Rubin's work on reconstruction.

In \cite{rub94}, Rubin showed that all Henson digraphs have a so called \emph{weak $\forall \exists$-interpretation}, which allows us to reconstruct the topology of the automorphism group. To be more precise, the automorphism groups of two Henson digraphs are isomorphic as abstract groups if and only if they are also isomorphic as topological groups. Since Henson digraphs have no algebraicity, we further know that their automorphism groups are topologically isomorphic if and only if they are conjugate.

Henson's construction in \cite{hen72} centres on finding an infinite anti-chain of finite tournaments. 

\begin{Def} Let $n \in \mathbb{N} \backslash \{0\}$. $I_n$ denotes the $n$-element tournament obtained from  the linear order $L_n$ by changing the direction of the edges $(i,i+1)$ for $i=1,\ldots,n-1$ and of the edge $(1,n)$.
\end{Def}

By counting 3-cycles, Henson showed that $\{I_n: n \geq 6\}$ is an anti-chain.  It is a short exercise to show that the 3-cycles in $I_n$  are $(1,3,n),(1,4,n),\ldots,(1,n-2,n),(1,2,3),(2,3,4),\ldots,(n-2,n-1,n)$.  In particular, we note that $I_n$ has at most two vertices through which there are more than five 3-cycles, namely the vertices 1 and $n$.

The automorphism groups of the Henson digraphs constructed by forbidding any subset of these $I_n$'s are not maximal: $\langle - \rangle$ and the automorphism group of the random graph are closed supergroups. By forbidding a few extra tournaments, however, we can ensure that the automorphism groups are maximal.

Let $T$ be a finite tournament that is not embeddable in $I_n$ for any $n$ and that contains a source but no sink. Such a $T$ can be found, for example, by ensuring there are at least three vertices through which there are more than five 3-cycles.  (A source, respectively sink, is a vertex which only has outgoing, respectively incoming, edges adjacent to it.)   

Let $k=|T|$. Then for $A \subseteq \mathbb{N}\backslash \{1,\ldots,k+1\}$, let $\mathcal{T}_A=\{I_n: n \in A\} \cup \{T': |T|=k+1, T$ is embeddable in $T'\}$. Then let $D_A$ be the Henson digraph whose set of forbidden tournaments is $\mt_A$. Observe that $\mt_A$ is an anti-chain. Hence, if $A,B \subseteq \mathbb{N}\backslash \{1,\ldots,k+1\}$ are not equal, $D_A \not\cong D_B$.

Now suppose for contradiction that Aut$(D_A)$ and Aut$(D_B)$ are conjugate. Let $f: D_A \to D_B$ be a bijection witnessing this, so that Aut$(D
_A)= f^{-1}$Aut$(D_B)f$. In particular this means that $f$ maps orbits of $D_A$ to orbits of $D_B$, i.e., that $f$ is canonical. Furthermore, $f$ is determined by its behaviour on 2-types. Now it is easy to see that $f$ cannot map edges to non-edges (true for all Henson digraphs) and that $f(p_E) \neq p_{E^*}$ (true by choice of $T$).  Thus, $f$ has the identity behaviour, so $f$ is an isomorphism, contradicting that $D_A \not\cong D_B$.

What remains is checking that Aut$(D_A)$ is maximal.

\begin{itemize}
\item $-$ nor $sw \in$ Sym$(D_A)$, since both these functions can turn sinks into sources.
\item $D_A$ embeds all finite linear orders, so $(D_A,\bar{E})$ is not $K_n$-free for any $n$, so $(D_A,\bar{E})$ is not a Henson graph.
\item Let $U \subset D_A$ be isomorphic to $T$ - this is possible as $T$ has not been forbidden. Then there is no vertex $x \in D$ such that for all $u \in U$, $E(x,u) \vee E(u,x)$, because all tournaments containing $T$ are forbidden. This implies $(D_A,\bar{E})$ is not isomorphic to the random graph.
\end{itemize}

Finally, by \autoref{main} we get:

\begin{thm} $\{$Aut$(D_A): A \subseteq \mathbb{N}\backslash \{1,\ldots,k+1\} \}$ is a set of $2^{\aleph_0}$ pairwise non-isomorphic maximal-closed subgroups of Sym$(\mathbb{N})$.
\end{thm}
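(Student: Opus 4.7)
The plan is to assemble the pieces already laid out in the discussion preceding the theorem and invoke Theorem~\ref{main}(iii) together with Rubin's reconstruction result. Cardinality is immediate: $\mathbb{N}\setminus\{1,\ldots,k+1\}$ is countably infinite, so the family indexed by its subsets has size $2^{\aleph_0}$. Since each $\mathcal{T}_A$ is an anti-chain of finite tournaments, the assignment $A\mapsto \mathcal{T}_A$ is injective, and distinct $A$ yield non-isomorphic Henson digraphs $D_A$.

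Next I would verify maximal-closedness of each Aut$(D_A)$. The three bullet points above establish, for every such $A$, that (a) neither $-$ nor $sw$ is a permutation of $D_A$, and (b) $(D_A,\bar{E})$ is neither the random graph nor a Henson graph; by the classification of countable homogeneous graphs \cite{lw80}, $(D_A,\bar{E})$ is therefore not homogeneous. Theorem~\ref{main}(iii) then applies: the quantity $\max\{\text{Aut}(D_A,E),\langle-\rangle,\langle sw\rangle,\langle -,sw\rangle\}$ collapses to Aut$(D_A,E)$ because $-$ and $sw$ do not exist, and the theorem declares this group to be maximal-closed in Sym$(\mathbb{N})$.

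It remains to upgrade ``pairwise non-isomorphic digraphs'' to ``pairwise non-isomorphic groups''. Here I would run the canonical-function argument already sketched in the paragraph preceding the theorem: a conjugating bijection $f:D_A\to D_B$ must map orbits to orbits, hence be canonical, and by the standard obstruction (no bijection between two Henson digraphs can flip $E\leftrightarrow N$, because that would introduce forbidden tournaments), together with the choice of $T$ containing a source but no sink (which forbids the $E\leftrightarrow E^*$ behaviour), $f$ must have identity behaviour on 2-types; hence $f$ is an isomorphism of digraphs, contradicting $D_A\not\cong D_B$. Finally, Rubin's weak $\forall\exists$-interpretation result \cite{rub94}, combined with the absence of algebraicity in Henson digraphs, implies that abstract group isomorphism between the Aut$(D_A)$ coincides with conjugacy in Sym$(\mathbb{N})$; pairwise non-conjugacy therefore delivers the desired pairwise non-isomorphism as abstract groups.

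The only substantive obstacle—arranging simultaneously that $(D_A,\bar{E})$ is not homogeneous \emph{and} that the $E\leftrightarrow E^*$ behaviour is ruled out—has already been handled by the careful choice of $T$ preceding the theorem. Everything else reduces to bookkeeping against the machinery built up in Section~2 and the external results \cite{lw80,rub94}.
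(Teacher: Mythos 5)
Your proof follows the same route as the paper: the anti-chain property of $\mathcal{T}_A$ yields $2^{\aleph_0}$ pairwise non-isomorphic digraphs, the three bullet points combined with \autoref{main}(iii) give maximal-closedness (since $-$ and $sw$ do not exist the max collapses to Aut$(D_A,E)$), the canonical-function argument rules out conjugacy via the two obstructions (no $E\leftrightarrow N$ flip, no $E\leftrightarrow E^*$ flip by choice of $T$), and Rubin's reconstruction result upgrades non-conjugacy to non-isomorphism as abstract groups. The proposal is correct.
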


%In this section we are going to construct $2^{\aleph_0}$ non isomorphic maximal-closed subgroups of Sym$(\mathbb{N})$. If we are able to show that there are $2^{\aleph_0}$ many pairwise non-isomorphic Henson digraphs with no proper reducts we are done: Their automorphism groups are clearly maximal-closed. Since the Henson digraphs are pairwise non-isomorphic, their automorphism groups are pairwise non-conjugated.

\bibliographystyle{alpha}
\bibliography{bibliography}

\end{document}